\title{Complete Ricci solitons via estimates on the soliton potential}
\author{Matthias Wink}
\address{Department of Mathematics, UCLA, 520 Portola Plaza, Los Angeles, CA, 90095}
\email{wink@math.ucla.edu}
\thanks{This work was supported by an EPSRC Research Studentship and the German National Academic Foundation}
\keywords{Ricci solitons, cohomogeneity one}
\subjclass[2010]{53C25, 53C44 (53C30)}
\begin{document}
\newcommand{\diam} {\operatorname{diam}}
\newcommand{\Scal} {\operatorname{Scal}}
\newcommand{\scal} {\operatorname{scal}}
\newcommand{\Ric} {\operatorname{Ric}}
\newcommand{\Hess} {\operatorname{Hess}}
\newcommand{\grad} {\operatorname{grad}}
\newcommand{\Sect} {\operatorname{Sect}}
\newcommand{\Rm} {\operatorname{Rm}}
\newcommand{ \Rmzero } {\mathring{\Rm}}
\newcommand{\Rc} {\operatorname{Rc}}
\newcommand{\Curv} {S_{B}^{2}\left( \mathfrak{so}(n) \right) }
\newcommand{ \tr } {\operatorname{tr}}
\newcommand{ \id } {\operatorname{id}}
\newcommand{ \Riczero } {\mathring{\Ric}}
\newcommand{ \ad } {\operatorname{ad}}
\newcommand{ \Ad } {\operatorname{Ad}}
\newcommand{ \dist } {\operatorname{dist}}
\newcommand{ \rank } {\operatorname{rank}}
\newcommand{\Vol}{\operatorname{Vol}}
\newcommand{\dVol}{\operatorname{dVol}}
\newcommand{ \zitieren }[1]{ \hspace{-3mm} \cite{#1}}
\newcommand{ \pr }{\operatorname{pr}}
\newcommand{\diag}{\operatorname{diag}}
\newcommand{\Lagr}{\mathcal{L}}
\newcommand{\av}{\operatorname{av}}
\newcommand{ \floor }[1]{ \lfloor #1 \rfloor }
\newcommand{ \ceil }[1]{ \lceil #1 \rceil }

\newtheorem{theorem}{Theorem}[section]
\newtheorem{definition}[theorem]{Definition}
\newtheorem{example}[theorem]{Example}
\newtheorem{remark}[theorem]{Remark}
\newtheorem{lemma}[theorem]{Lemma}
\newtheorem{proposition}[theorem]{Proposition}
\newtheorem{corollary}[theorem]{Corollary}
\newtheorem{assumption}[theorem]{Assumption}
\newtheorem{acknowledgment}[theorem]{Acknowledgment}
\newtheorem{DefAndLemma}[theorem]{Definition and lemma}
\newtheorem{questionroman}[theorem]{Question}

\newenvironment{remarkroman}{\begin{remark} \normalfont }{\end{remark}}
\newenvironment{exampleroman}{\begin{example} \normalfont }{\end{example}}
\newenvironment{question}{\begin{questionroman} \normalfont }{\end{questionroman}}

\renewcommand{\labelenumi}{(\alph{enumi})}
\newtheorem{maintheorem}{Theorem}[]
\renewcommand*{\themaintheorem}{\Alph{maintheorem}}
\newtheorem*{theorem*}{Theorem}
\newtheorem*{corollary*}{Corollary}
\newtheorem*{remark*}{Remark}
\newtheorem*{example*}{Example}
\newtheorem*{question*}{Question}

\newcommand{\R}{\mathbb{R}}
\newcommand{\N}{\mathbb{N}}
\newcommand{\Z}{\mathbb{Z}}
\newcommand{\Q}{\mathbb{Q}}
\newcommand{\C}{\mathbb{C}}
\newcommand{\F}{\mathbb{F}}
\newcommand{\X}{\mathcal{X}}
\newcommand{\D}{\mathcal{D}}
\newcommand{\Cont}{\mathcal{C}}

\begin{abstract}
In this paper a growth estimate on the soliton potential is shown for a large class of cohomogeneity one manifolds. This is used to construct continuous families of complete steady and expanding Ricci solitons in the set-ups of L\"u-Page-Pope \cite{LuPagePopeQEinstein} and Dancer-Wang \cite{DWCohomOneSolitons}. It also provides a different approach to the two summands system \cite{WinkSolitonsFromHopfFibrations} which applies to all known geometric examples.
\end{abstract}

\maketitle

\section*{Introduction}

A Ricci soliton is a Riemannian manifold $(M,g)$ together with smooth vector field $X$ on $M$ satisfying the equation
\begin{equation*}
\Ric + \frac{1}{2} L_X g + \frac{\varepsilon}{2} g = 0
\end{equation*}
for some $\varepsilon \in \R,$ where $L_X g$ denotes the Lie derivative of the metric $g$ with respect to $X.$ It is called {\em non-trivial} if $X$ is not a Killing vector field and {\em gradient} if $X$ is the gradient of a smooth function $u \colon M \to \R.$ Furthermore, it is called {\em shrinking}, {\em steady} or {\em expanding} depending on whether $\varepsilon<0,$ $\varepsilon=0$ or $\varepsilon>0.$

Ricci solitons characterise self-similar solutions to the Ricci flow and are therefore natural generalisations of Einstein metrics. One approach to constructing Ricci solitons uses K\"ahler geometry. A Ricci soliton is {\em K\"ahler} if there exists a complex structure on $M$ such that the metric $g$ is K\"ahler and the vector field $X$ is holomorphic. In this case the theory of complex Monge-Amp\`ere equations and its connection to $K$-stability provide tools that have successfully been applied to show the existence of K\"ahler Ricci solitons in various settings, e.g. \cite{WZtoricKRS}, \cite{ISKStabilityToricFanos}, \cite{CSclassifactionKRSonGorensteinDelPezzoSurfaces}.

This paper on the other hand uses the theory of cohomogeneity one manifolds to give constructions of {\em non}-K\"ahler examples. In this case many of the known examples are of warped product type, e.g. \cite{BryantSoliton}, \cite{IveyNewExamplesRS}, \cite{GKExpandingRS}, \cite{DWExpandingSolitons}, \cite{DWSteadySolitons}, \cite{BDGWExpandingSolitons}, \cite{BDWSteadySolitons} or  \cite{AngenentKnopfRSConicalSingNonuniqueness}. In particular, the rotationally symmetric steady Ricci soliton on $\R^n$ for $n \geq 3$, the Bryant soliton \cite{BryantSoliton}, is non-K\"ahler. On the other hand, its $2$-dimensional analogue, Hamilton's cigar \cite{HamiltonRFonSurfaces}, is K\"ahler. Examples of non-K\"ahler steady and expanding Ricci solitons on non-trivial bundles were described in \cite{WinkSolitonsFromHopfFibrations}.

On complex line bundles over $\mathbb{C}P^{n}$ explicit K\"ahler Ricci solitons have been described by Cao \cite{CaoSoliton} and Feldman-Ilmanen-Knopf \cite{FIKSolitons}. More generally, on a complex line bundle over a Fano K\"ahler Einstein manifold, whose Euler class is a rational multiple of the first Chern class of the base, a family of complete non-K\"ahler steady Ricci solitons was constructed by Appleton \cite{AppletonSteadyRS} and Stolarski \cite{StolarskiSteadyRSOnCxLineBundles}. On some of these bundles, Appleton has also found non-collapsed examples. This paper extends their result about the existence of non-K\"ahler Ricci solitons to the expanding case. 

More generally, it will be shown that the examples of steady and expanding K\"ahler Ricci solitons on complex line bundles over {\em products} of Fano K\"ahler Einstein manifolds due to Dancer-Wang \cite{DWCohomOneSolitons} appear amongst continuous families of non-K\"ahler Ricci solitons.

\begin{maintheorem}
Let $L$ be the total space of a complex line bundle over a product of Fano K\"ahler Einstein manifolds with $m \geq 1$ factors whose Euler class is a rational linear combination of the first Chern classes of the base manifolds. Then there exist an $m$-parameter family of complete non-trivial steady and an $(m+1)$-parameter family of complete non-trivial expanding gradient Ricci solitons on $L.$ 
\label{RSOnLineBundlesOverProductsOfFanos}
\end{maintheorem}

The existence of Einstein metrics on the total space $L$ of complex line bundles over Fano K\"ahler Einstein manifolds was discussed in the seminal work of Berard-Bergery \cite{BeBeEinstein}. Generalising this set-up in a different direction, L\"u-Page-Pope \cite{LuPagePopeQEinstein} considered Einstein metrics of warped product type on $S^m \times L$ for $m>1$ and found complete Ricci flat metrics. In fact the sphere can be replaced by an Einstein manifold of positive scalar curvature. All of their examples also support complete steady and expanding Ricci solitons.

\begin{maintheorem}
There exist continuous families of complete non-trivial steady and expanding gradient Ricci solitons on any product $L \times S^m$ of a sphere $S^m,$ $m>1,$ with the total space $L$ of a complex line bundle over a Fano K\"ahler Einstein manifold $M$ whose Euler class is a rational multiple of the first Chern class of $M.$
\label{RSInLuPagePopeSetUp}
\end{maintheorem}

In all the examples above, there is a circle fibre that collapses to a point. The methods of this paper can also be applied if there is a collapsing sphere. As an example, the two summands case of \cite{WinkSolitonsFromHopfFibrations} will be revisited and the results will be extended to a larger parameter range. In particular, all low dimensional examples that are induced by the generalised Hopf fibrations are covered.

\begin{maintheorem} 
There exist a $1$-parameter family of non-homothetic complete steady Ricci solitons and a $2$-parameter family of non-homothetic complete expanding Ricci solitons on the vector bundle associated to the group diagram
\begin{align*}
(G,H,K) = (Sp(m+1), Sp(1) \times Sp(m),  U(1) \times Sp(m))
\end{align*}
for all $m \geq 1.$
\label{TheoremOnLowDimTwoSummands}
\end{maintheorem}
The examples with $m \geq 3$ where already discussed in \cite{WinkSolitonsFromHopfFibrations}, which uses a technique that applies to both Ricci solitons and Einstein metrics. 

In contrast, the constructions in this paper are based on a growth estimate for the soliton potential on cohomogeneity one manifolds with a singular orbit. If the Ricci soliton metric is complete, it is known that in the steady case the potential has linear growth at infinity \cite{BDWSteadySolitons} whereas in the expanding case it has quadratic growth at infinity \cite{BDGWExpandingSolitons}.

However, in the construction problem for Ricci solitons, completeness cannot be assumed a priori. If the metric on the cohomogeneity one manifold remains in diagonal form and the shape operator of the principal orbit is positive definite, which is the case in all currently known examples, then theorem \ref{MainGrowthEstimateTheorem} guarantees {\em immediate linear growth} of the soliton potential with a {\em prescribed growth rate} provided that a condition on the second derivative of the soliton potential at the singular orbit is satisfied. This in turn can be used to construct complete Ricci solitons.

\vspace{2mm}

\textit{Structure of the paper.} The proof of the growth estimate in theorem \ref{MainGrowthEstimateTheorem} as well as the necessary background from the theory of cohomogeneity one Ricci solitons will be explained in section \ref{SectionGrowthSolitonPotential}. Applications to Theorems \ref{RSOnLineBundlesOverProductsOfFanos} - \ref{TheoremOnLowDimTwoSummands} follow in section \ref{ApplicationsSection}.

\vspace{2mm}

\textit{Acknowledgements.} I wish to thank my PhD advisor Andrew Dancer for constant support, helpful comments and numerous discussions, McKenzie Wang for comments on an earlier version of this paper, and Christoph B\"ohm and Burkhard Wilking for remarks that led to proposition \ref{CompletenessForPositivityOfShapeOperator}.

\section{Growth of the soliton potential}
\label{SectionGrowthSolitonPotential}

\subsection{Preliminaries}
\label{PreliminariesSection}

Suppose that $(M^{n+1},g)$ is a Riemannian manifold and $G$ a compact Lie group acting isometrically on $M$ with cohomogeneity one. In the following the existence of a singular orbit will be assumed, and let $K \subset H \subset G$ denote the isotropy groups of the principal and singular orbit, respectively. As every non-trivial steady or expanding Ricci soliton is non-compact, the orbit space $M/G$ will be assumed to be isometric to $[0,T)$ for some $T >0.$ A choice of a unit speed geodesic on $M$ then induces a $G$-equivariant diffeomorphism $\Phi \colon (0,T) \times G/K \to M_0$ onto the union of all principal orbits $M_0$ and the pullback metric is of the form $dt^2 + g_t$ for a $1$-parameter family of metrics $g_t$ on the principal orbit $P=G/K.$ 

The shape operator $L_t$ of the hypersurface $\Phi(\left\lbrace t\right\rbrace \times P)$ will be regarded as a $g_t$-symmetric endomorphism of $TP$ which satisfies $\dot{g}_t = 2 g_t L_t$ and $r_t$ is the Ricci endomorphism corresponding to $g_t.$ Suppose that $u$ is a $G$-invariant function on $M.$ It follows from results of Dancer-Wang \cite{DWCohomOneSolitons} that, due to the existence of a singular orbit, the gradient Ricci soliton equations for $(M,g,u)$ reduce to the ODE system
\begin{align}
- \tr( \dot{L}_t) - \tr(L_t^2) + \ddot{u} + \frac{\varepsilon}{2} & =0, \label{CohomOneRSb} \\
- \dot{L}_t - (- \dot{u} + \tr(L_t)) L_t + r_t + \frac{\varepsilon}{2} \mathbb{I} & = 0. \label{CohomOneRSc}
\end{align}
Conversely, if the metric is at least $C^2$-regular and the soliton potential is of class $C^3,$ then a solution to these equations with $\dot{g}_t = 2 g_t L_t$ induces the structure of a smooth gradient Ricci soliton on $M.$ Furthermore, any $C^3$-regular gradient Ricci soliton has to satisfy the conservation law
\begin{equation}
\ddot{u} + (-\dot{u}+\tr(L)) \dot{u} = C+ \varepsilon u \label{GeneralConservationLaw}
\end{equation}
for some constant $C \in \R.$ It follows from the Ricci soliton equations that the conservation law also takes the form
\begin{equation}
\tr \left( r \right) + \tr ( L^2 ) - \left( - \dot{u} + \tr \left( L \right) \right) ^2 + (n-1) \frac{\varepsilon}{2}  = C +\varepsilon u.
\label{ReformulatedGeneralConsLaw}
\end{equation}

\begin{remarkroman}
Let $\frac{d}{ds} = \frac{1}{- \dot{u} + \tr(L)} \frac{d}{dt}$ and $\mathcal{L} = \frac{1}{-\dot{u}+\tr(L)}.$ Then a direct calculation based only on the Ricci soliton equations shows that 
\begin{align*}
\frac{1}{2} \frac{d}{ds} \frac{C + \varepsilon u}{(-\dot{u}+ \tr(L))^2} = & \ \left( \frac{\tr(L)}{-\dot{u}+ \tr(L)} - 1 \right) \frac{\varepsilon}{2} \mathcal{L}^2  \\
& \ + \frac{C + \varepsilon u}{(-\dot{u}+ \tr(L))^2} \left( \frac{\tr(L^2)}{(-\dot{u}+ \tr(L))^2} - \frac{\varepsilon}{2}  \mathcal{L}^2 \right), \\
\frac{d}{ds} \left( \frac{\tr(L)}{-\dot{u}+ \tr(L)} - 1 \right) = & \ \left( \frac{\tr(L)}{-\dot{u}+ \tr(L)} - 1 \right) \left( \frac{\tr(L^2)}{(-\dot{u}+ \tr(L))^2} - \frac{\varepsilon}{2} \mathcal{L}^2 - 1 \right) \\
& \ + \frac{\tr(L^2)+\tr(r)}{(-\dot{u}+ \tr(L))^2} + (n-1) \frac{\varepsilon}{2} \mathcal{L}^2 - 1.
\end{align*}
The conservation law \eqref{ReformulatedGeneralConsLaw} connects the two equations and identifies two loci inside phase space: The two loci
\begin{align*}
& \left\lbrace \frac{\tr(L)}{-\dot{u}+ \tr(L)} < 1 \ \text{ and } \ \frac{\tr(L^2)+\tr(r)}{(-\dot{u}+ \tr(L))^2} + (n-1) \frac{\varepsilon}{2} \mathcal{L}^2 < 1 \right\rbrace, \\
& \left\lbrace \frac{\tr(L)}{-\dot{u}+ \tr(L)} = 1 \ \text{ and } \ \frac{\tr(L^2)+\tr(r)}{(-\dot{u}+ \tr(L))^2} + (n-1) \frac{\varepsilon}{2} \mathcal{L}^2 = 1 \right\rbrace
\end{align*}
are preserved by the Ricci soliton ODE if $\varepsilon \geq 0.$ The first locus in fact contains all trajectories corresponding to complete steady or expanding Ricci solitons and the second locus corresponds to Einstein trajectories with nonpositive scalar curvature.
\label{PreservedRSandEinsteinLoci}
\end{remarkroman}

After changing $u$ by an additive constant, one may assume that $u(0)=0$. Then the constant $C$ of the conservation law appears as follows in the smoothness conditions of the soliton potential at the singular orbit:
\begin{equation}
u(0)=0, \ \ \ \dot{u}(0)=0, \ \ \ \ddot{u}(0)=\frac{C}{d_S +1}, \label{InitialConditionsPotentialFunction}
\end{equation}
where $d_S$ denotes the dimension of the collapsing sphere $H/K.$ 

It follows from bounds on the scalar curvature due to Chen \cite{ChenStrongUniquenessRF} that any non-trivial complete steady or expanding Ricci soliton has to satisfy $C<0.$ Furthermore, even along a possibly incomplete solution to the steady or expanding Ricci soliton equations with $C<0$ in \eqref{InitialConditionsPotentialFunction}, the soliton potential is strictly decreasing and concave (provided $L_t \neq 0$ if $\varepsilon = 0$), see \cite[Propositions 2.3 and 2.4]{BDWSteadySolitons} and \cite[Proposition 1.11]{BDGWExpandingSolitons}. For the convenience of the reader, and to emphasise that completeness is not required, the proofs will be summarised in a single argument:

\begin{proposition}
Along any Ricci soliton trajectory with $\varepsilon \geq 0$ and $C<0$ in \eqref{InitialConditionsPotentialFunction} that corresponds to a cohomogeneity one manifold of dimension $n+1$ with a singular orbit at $t=0,$ for $t > 0$ and as long as the solution exists, the soliton potential satisfies $u(t)<0,$ $\dot{u}(t)<0$ and also $\ddot{u}(t)<0$ if $\varepsilon >0$ or $\varepsilon = 0$ and $L_t \neq 0.$

Furthermore, if $\varepsilon = 0$ and $C \leq 0,$ there holds $\tr(L_t) \leq \frac{n}{t}$ for $t>0$ and as long as the solution exists.
\label{PotentialFunctionOfExpandingRSAlongCohomOneFlow}
\end{proposition}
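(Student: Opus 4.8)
The plan is to combine the initial data \eqref{InitialConditionsPotentialFunction} with first-return (barrier) arguments driven by the conservation law \eqref{GeneralConservationLaw}. First I would observe that since $C<0$ and $\varepsilon\geq 0$, the initial conditions give $\dot{u}(0)=0$ and $\ddot{u}(0)=\frac{C}{d_S+1}<0$, so that $\dot{u}(t)<0$ for all sufficiently small $t>0$. To propagate this, let $t_0>0$ be the first time at which $\dot{u}$ returns to $0$, if such a time exists; otherwise $\dot{u}<0$ throughout and there is nothing more to show. On $(0,t_0)$ we have $\dot{u}<0$, so $u$ is strictly decreasing and $u(t_0)<u(0)=0$, while the approach of $\dot{u}$ to $0$ from below forces $\ddot{u}(t_0)\geq 0$. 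Evaluating \eqref{GeneralConservationLaw} at $t_0$, where $\dot{u}(t_0)=0$, yields $\ddot{u}(t_0)=C+\varepsilon u(t_0)$, which is strictly negative since $C<0$, $\varepsilon\geq 0$ and $u(t_0)<0$. This contradiction shows $\dot{u}(t)<0$ for all $t>0$, and hence also $u(t)<u(0)=0$ for all $t>0$.

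For the concavity statement I would run the analogous argument one derivative higher. Differentiating the conservation law \eqref{GeneralConservationLaw} and substituting $\tr(\dot{L}_t)=\ddot{u}-\tr(L_t^2)+\frac{\varepsilon}{2}$ from \eqref{CohomOneRSb} to eliminate the derivative of the shape operator, the cross terms in $\ddot{u}\,\dot{u}$ cancel and one is left with the clean identity
\[
\dddot{u} = \left( \frac{\varepsilon}{2} + \tr(L_t^2) \right) \dot{u} - \left( -\dot{u} + \tr(L_t) \right) \ddot{u}.
\]
Since $\ddot{u}(0)<0$, suppose $t_1>0$ is the first time at which $\ddot{u}$ returns to $0$; then $\dddot{u}(t_1)\geq 0$. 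But at $t_1$ the identity collapses to $\dddot{u}(t_1)=\left(\frac{\varepsilon}{2}+\tr(L_{t_1}^2)\right)\dot{u}(t_1)$, and the first paragraph gives $\dot{u}(t_1)<0$. Whenever $\varepsilon>0$, or $\varepsilon=0$ together with $L_{t_1}\neq 0$, the factor $\frac{\varepsilon}{2}+\tr(L_{t_1}^2)$ is strictly positive, so $\dddot{u}(t_1)<0$, a contradiction. Hence $\ddot{u}(t)<0$ for all $t>0$ under the stated hypotheses.

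I expect the only delicate point to be the bookkeeping in differentiating \eqref{GeneralConservationLaw} and checking that, after inserting \eqref{CohomOneRSb}, the $\ddot{u}\,\dot{u}$ contributions indeed cancel so that the sign of $\dddot{u}$ at $t_1$ is controlled purely by $\dot{u}(t_1)$ and the manifestly nonnegative quantity $\frac{\varepsilon}{2}+\tr(L_t^2)$; everything else is a soft maximum-principle argument. I would also note that these first-return arguments are legitimate on the maximal, possibly incomplete, interval of existence, since they are entirely local and rely only on the $C^3$-regularity of $u$ and the continuity of $L_t$ and $r_t$.
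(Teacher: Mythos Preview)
Your proof is correct and follows essentially the same approach as the paper's own proof: the first-return argument on $\dot{u}$ using the conservation law \eqref{GeneralConservationLaw}, followed by differentiating \eqref{GeneralConservationLaw} and substituting \eqref{CohomOneRSb} to obtain the identity for $\dddot{u}$, then a first-return argument on $\ddot{u}$. The bookkeeping you flagged as delicate is exactly the computation the paper carries out, and your version of it is correct.
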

\begin{proof}
It follows from the initial conditions \eqref{InitialConditionsPotentialFunction}, that the claim is true for $t>0$ small enough. Suppose that $\dot{u}(t_0)=0$ and $\dot{u}(t)<0$ for $t \in (0,t_0).$ Then it follows that $u(t_0)<0$ and $\ddot{u}(t_0) \geq 0.$ However, the conservation law \eqref{GeneralConservationLaw} implies the contradiction 
\begin{equation*}
0 \leq \ddot{u}(t_0) = C + \varepsilon u(t_0) < 0.
\end{equation*}
Therefore there holds $\dot{u}(t)<0$ for $t>0$ and hence also $u(t)<0$ for $t>0.$ Finally, to prove concavity, differentiate the conservation law \eqref{GeneralConservationLaw} and use \eqref{CohomOneRSb} to get 
\begin{align*}
0 & = \frac{d^3}{dt^3} u + (- \dot{u} + \tr(L)) \ddot{u} - \varepsilon \dot{u} + (- \ddot{u} + \tr( \dot{L} )) \dot{u} \\
  & = \frac{d^3}{dt^3} u + (- \dot{u} + \tr(L)) \ddot{u} - \varepsilon \dot{u} + ( - \tr(L^2) + \frac{\varepsilon}{2} ) \dot{u} \\
  & = \frac{d^3}{dt^3} u + (- \dot{u} + \tr(L)) \ddot{u} - (\frac{\varepsilon}{2} + \tr(L^2) )\dot{u}.
\end{align*}
Therefore, if there is $t_1>0$ such that $\ddot{u}(t_1)=0$ then $\frac{d^3}{dt^3} u (t_1)< 0.$ By continuity this in turn implies that $\ddot{u}(t)<0$ for all $t>0.$

Finally, restrict to the steady case $\varepsilon=0$ and recall that $ \frac{1}{n} \tr(L)^2 \leq \tr(L^2)$ follows from $\tr( (L^{(0)})^2) \geq 0,$ where $L^{(0)}= L - \frac{1}{n} \tr(L) \mathbb{I}$ is the trace-less shape operator. Therefore the Ricci soliton equation \eqref{CohomOneRSb} implies 
\begin{equation*}
\frac{d}{dt} \tr(L) = - \tr(L^2) + \ddot{u} \leq - \frac{1}{n} \tr(L)^2.
\end{equation*}
Note that $f(t) = \left( \frac{1}{f(t_0)} + \frac{1}{n}(t-t_0) \right) ^{-1}$ solves $f^{'} = - \frac{1}{n} f^2.$ Since $\tr(L) = \frac{d_S}{t} + O(t),$ where $d_S$ is the dimension of the collapsing sphere, the claim follows.
\end{proof} 

If the Ricci soliton is {\em complete} then the soliton potential is also necessarily concave in the steady case because the mean curvature of the principal orbit is then always positive. Furthermore the following growth estimates on the potential have been established in \cite[Proposition 2.4]{BDWSteadySolitons} and \cite[Equation (1.10) and Proposition 1.18]{BDGWExpandingSolitons}:

\begin{proposition}
Consider a non-trivial {\em complete} cohomogeneity one gradient Ricci soliton of dimension $n+1$ with a singular orbit at $t=0$ and integrability constant $C<0$ in \eqref{InitialConditionsPotentialFunction}.

In the {\em steady} case it follows that $0<\tr(L_t) \leq \frac{n}{t}$ for all $t > 0$ and 
\begin{align*}
-\dot{u}(t) \to \sqrt{-C} \ \text{ and } \ \ddot{u}(t) \to 0
\end{align*}
as $t \to \infty.$ 

In the {\em expanding} case, there is $t_0>0$ such that $| \tr(L_t)| \leq \sqrt{\frac{n}{2} \varepsilon}$ for all $t \geq t_0$ and given any $\delta \in (0,1)$ there is $t_1>0$ such that 
\begin{align*}
(1- \delta) \frac{\frac{\varepsilon}{2} t_{\text{ }} + \sqrt{n \frac{\varepsilon}{2}} + \sqrt{-C}}{\frac{\varepsilon}{2}t_1+\sqrt{n \frac{\varepsilon}{2}} + \sqrt{-C}}  (-\dot{u}(t_1)) < - \dot{u}(t) < \frac{\varepsilon}{2} t + \sqrt{-C}
\end{align*}
for all $t \geq t_1.$ The second inequality holds in fact for all $t \geq 0.$
\label{AsymptoticsOfPotentialFunction}
\end{proposition}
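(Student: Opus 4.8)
The plan is to work throughout with $h := -\dot{u}$ and the modified mean curvature $\phi := -\dot{u} + \tr(L)$, and to record two consequences of the soliton ODEs that I will use repeatedly: differentiating $\phi$ and inserting \eqref{CohomOneRSb} gives $\dot\phi = \tfrac{\varepsilon}{2} - \tr(L^2)$, while rewriting the conservation law \eqref{GeneralConservationLaw} with $\dot{u} = -h$ gives $\dot{h} = -C - \varepsilon u - \phi h$. By Proposition \ref{PotentialFunctionOfExpandingRSAlongCohomOneFlow} we already have $u,\dot{u}<0$, hence $h>0$, for $t>0$. The first genuine input I would establish is positivity of the mean curvature. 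In the steady case this can be done directly: near $t=0$ the collapsing sphere forces $\tr(L)\sim d_S/t>0$, and if $\tr(L)$ first reached a non-positive value, then on the region where $\tr(L)\le 0$ one has $L\neq 0$, so $\ddot{u}<0$ by Proposition \ref{PotentialFunctionOfExpandingRSAlongCohomOneFlow}, and \eqref{CohomOneRSb} together with $\tr(L^2)\ge\tfrac1n\tr(L)^2$ yields $\tfrac{d}{dt}\tr(L) = -\tr(L^2)+\ddot{u} < -\tfrac1n\tr(L)^2$; the resulting Riccati inequality drives $\tr(L)\to-\infty$ in finite time, contradicting completeness. (For the expanding case I would instead invoke the corresponding positivity from \cite{BDGWExpandingSolitons}, where the extra $\tfrac{\varepsilon}{2}$ in \eqref{CohomOneRSb} must be absorbed.)

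For the steady case, once $\tr(L)>0$ we have $L\neq 0$, so $\ddot{u}<0$ by Proposition \ref{PotentialFunctionOfExpandingRSAlongCohomOneFlow}; thus $h$ is strictly increasing. Setting $\varepsilon=0$ in $\dot{h}=-C-\phi h$ gives $\phi h = -C-\dot{h} < -C$, and since $\tr(L)>0$ forces $\phi>h$, we obtain $h^2 < \phi h < -C$, i.e. $h<\sqrt{-C}$. Being increasing and bounded, $h\to\ell\le\sqrt{-C}$. Meanwhile $\dot\phi=-\tr(L^2)\le 0$ shows $\phi$ is decreasing and positive, hence convergent, so $\int^{\infty}\tr(L^2)\,dt<\infty$; a standard argument (Barbalat's lemma, using that the relevant third derivatives are bounded) gives $\tr(L^2)\to 0$, whence $\tr(L)\to 0$ and $\phi\to\ell$, and likewise $\ddot{u}\to 0$. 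Passing to the limit in $\phi h = -C-\dot{h}$ then yields $\ell^2=-C$, i.e. $-\dot{u}\to\sqrt{-C}$ and $\ddot{u}\to 0$.

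For the expanding upper bound, $\ddot{u}<0$ for all $t>0$ by Proposition \ref{PotentialFunctionOfExpandingRSAlongCohomOneFlow}, and $h(0)=0<\sqrt{-C}$, so the bound holds initially. The clean step is a first-crossing argument: if $t_1$ is the first time $h(t_1)=\tfrac{\varepsilon}{2}t_1+\sqrt{-C}$, then $-u(t_1)=\int_0^{t_1}h\,ds < \int_0^{t_1}(\tfrac{\varepsilon}{2}s+\sqrt{-C})\,ds = \tfrac{\varepsilon}{4}t_1^2+\sqrt{-C}\,t_1$, and a direct computation gives the cancellation $-\varepsilon u(t_1)-h(t_1)^2<C$. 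Substituting into $\dot{h}=-C-\varepsilon u-\phi h = (-C-\varepsilon u-h^2)-\tr(L)h$ yields $\dot{h}(t_1)<-\tr(L)(t_1)\,h(t_1)\le 0$, contradicting $\dot{h}(t_1)\ge\tfrac{\varepsilon}{2}>0$ at a first upcrossing. Hence $h<\tfrac{\varepsilon}{2}t+\sqrt{-C}$ for all $t\ge 0$.

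The expanding lower bound is the main obstacle. Writing $H(t)=\tfrac{\varepsilon}{2}t+\sqrt{n\varepsilon/2}+\sqrt{-C}$, the claim is that $h/H$ cannot fall below $\tfrac{9}{10}$ of its value at $t_0$. I would set up a barrier for $E=h-\tfrac{9}{10}\tfrac{H}{H(t_0)}h(t_0)$ and, at a hypothetical first zero $t_2$, try to contradict $\dot{h}(t_2)\le\tfrac{\varepsilon}{2}\tfrac{h(t_2)}{H(t_2)}$ by bounding $\dot{h}=-C-\varepsilon u-\phi h$ from below. This needs a lower bound on $-u=\int_0^t h$ (available from the barrier hypothesis on $[t_0,t_2]$ and $h\ge 0$ on $[0,t_0]$) and, crucially, an upper bound on $\phi$. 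The latter is the hard point: from $\dot\phi=\tfrac{\varepsilon}{2}-\tr(L^2)\le\tfrac{\varepsilon}{2}$ one gets $\phi(t)\le\phi(t_0)+\tfrac{\varepsilon}{2}(t-t_0)$, but one must still control $\phi(t_0)$ — equivalently the orbit curvature $\tr(r_t)$ entering \eqref{ReformulatedGeneralConsLaw} — and track the error terms quantitatively. I expect this bookkeeping, carried out either through the above barrier or through a Grönwall estimate for the Riccati equation $\dot{v}=-v^2-\phi v+\tfrac{\varepsilon}{2}+\tr(L^2)$ satisfied by $v=\dot{h}/h$, to be where the constant $\tfrac{9}{10}$ and the threshold $t_0>2\sqrt{5/\varepsilon}$ are produced, the latter ensuring that the accumulated error integral is small enough; the term $\sqrt{n\varepsilon/2}$ in $H$ reflects precisely the curvature contribution that this estimate must absorb. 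I would follow \cite{BDGWExpandingSolitons} for the precise choice of comparison function.
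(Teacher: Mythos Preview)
The paper does not actually prove this proposition. It is stated as a known result, with the steady case attributed to \cite{BDWSteadySolitons}[Proposition 2.4] and the expanding case to \cite{BDGWExpandingSolitons}[Equation (1.10) and Proposition 1.18]; no argument is supplied in the paper itself beyond those citations. So there is no in-paper proof against which to compare your attempt.

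That said, your outline for the steady case and for the expanding upper bound is the natural one and is presumably close to what the cited references do: work with $h=-\dot u$ and $\phi=-\dot u+\tr(L)$, use the conservation law in the form $\dot h=-C-\varepsilon u-\phi h$, exploit the monotonicity $\dot\phi=\tfrac{\varepsilon}{2}-\tr(L^2)$, and run a first-crossing comparison. Two minor points of care. First, your invocation of Proposition~\ref{PotentialFunctionOfExpandingRSAlongCohomOneFlow} to obtain $\ddot u<0$ in the steady case treats it as a pointwise statement, but as written it assumes $L_t\neq 0$ for \emph{all} $t>0$; the pointwise conclusion you need follows from the proof of that proposition (the differentiated conservation law) rather than its statement, and for the Riccati blow-up argument for $\tr(L)$ you only need $\ddot u\le 0$ anyway. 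Second, your expanding upper bound argument uses $\tr(L)\ge 0$, which you correctly flag as requiring the positivity result from \cite{BDGWExpandingSolitons}. For the expanding lower bound you identify the genuine difficulty (controlling $\phi$, equivalently the orbit scalar curvature in \eqref{ReformulatedGeneralConsLaw}) and defer to \cite{BDGWExpandingSolitons} for the quantitative bookkeeping that produces the constants $\tfrac{9}{10}$ and $2\sqrt{5/\varepsilon}$; this is precisely what the paper itself does.
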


\subsection{An estimate on the soliton potential}
\label{SectionGrowthEstimateTheorem}

When investigating the Ricci soliton equations in order to construct new examples, the estimates in proposition \ref{AsymptoticsOfPotentialFunction} are {\em not} available because the trajectory may not correspond to a complete Ricci soliton. 

Therefore it is essential that theorem \ref{MainGrowthEstimateTheorem} below does {\em not} assume that the Ricci soliton is complete. Furthermore, it shows that the growth rate of the soliton potential can be prescribed for small times $t>0$, when the existence of a solution to the Ricci soliton equations can be guaranteed by short time existence results, e.g. \cite{BuzanoInitialValueSolitons}. 

\begin{theorem}
Let $(M^{n+1},g,u)$ be a steady or expanding gradient Ricci soliton which is of cohomogeneity one under the isometric action of the compact Lie group $G.$ Suppose $\Phi \colon (0, T) \times P \to M_0$ is an isometric parametrisation of the union of all principal orbits $M_0$ and the pullback metric is given by $dt^2 + g_t.$ Assume that there is a singular orbit $G/H$ at $t=0$ and that the isotropy representation of $P=G/K$ is monotypic, that is, it decomposes into pairwise inequivalent $\Ad_K$-invariant irreducible summands. 

Fix $t_{*} \in [0, T)$, $c_{*}>0$ and suppose that the Killing form $B$ of $\mathfrak{g}$ satisfies $-\tr_{g_{t_{*}}}(B_{|\mathfrak{p}}) < c_{*}$ and that the shape operator $L_t$ of the hypersurface $P_t = \Phi( \lbrace t \rbrace \times P)$ is positive definite for all $t \geq t_{*}.$ 

Then for all $c>0$ and for all $\tau \in (t_{*},T)$ there exists a constant $C_0 \leq 0$ depending only on $c_{*},$ $c,$ $\tau -t_{*}$ such that $- \dot{u}(t) \geq c$ on $[\tau,T),$ provided that $\ddot{u}(0) < C_0.$
\label{MainGrowthEstimateTheorem}
\end{theorem}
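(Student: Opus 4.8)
The plan is to reduce the statement to two scalar comparison (Riccati) arguments: one that bounds the mean curvature $\tr(L_t)$ by a constant that is \emph{independent} of $C$, and one that uses the largeness of $-C$ to force growth of $-\dot u$. Throughout I abbreviate $M:=\tr(L_t)$ and $v:=-\dot u$, and I keep the normalisation $u(0)=0$, so that \eqref{InitialConditionsPotentialFunction} reads $C=(d_S+1)\ddot u(0)$; thus making $\ddot u(0)$ sufficiently negative is the same as making $C$ sufficiently negative. As soon as $C_0\le 0$ and $\ddot u(0)<C_0$ one has $C<0$, so Proposition \ref{PotentialFunctionOfExpandingRSAlongCohomOneFlow} applies and yields $u<0$ and $v>0$ for all $t>0$; these signs will be used freely below.

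First I would convert the hypotheses into a one-sided bound on the scalar curvature $\tr(r_t)$. Since the isotropy representation is monotypic, $g_t$ and the Killing form $B$ are simultaneously diagonal with respect to the fixed irreducible summands, and positivity of $L_t$ for $t\ge t_*$ forces every eigenvalue of $g_t$ to increase, because $\dot g_t=2g_tL_t$. Hence $-\tr_{g_t}(B)$ is nonincreasing on $[t_*,T)$ and so $-\tr_{g_t}(B)\le -\tr_{g_{t_*}}(B)<c_*$ there. Combining this with the standard scalar-curvature formula of a compact homogeneous space, whose positive part is exactly $\tfrac12\bigl(-\tr_{g_t}(B)\bigr)$ and whose remaining terms are nonpositive, gives $\tr(r_t)\le \tfrac12\bigl(-\tr_{g_t}(B)\bigr)<\tfrac{c_*}{2}$ for all $t\ge t_*$.

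The crucial point is that this bound controls $M$ through a constant that sees neither $C$ nor $u$. Taking the trace of \eqref{CohomOneRSc} gives $\dot M=\tr(r_t)-M^2-vM+\tfrac{n\varepsilon}{2}$; discarding $-vM\le0$ (here $M>0$ and $v>0$) and inserting the scalar-curvature bound produces the \emph{autonomous} Riccati inequality $\dot M\le \beta-M^2$ with $\beta:=\tfrac{c_*+n\varepsilon}{2}>0$. Comparison with the solution blowing up at $t_*^+$ then yields $M(t)\le \sqrt\beta\coth\!\bigl(\sqrt\beta(t-t_*)\bigr)$ for every $t>t_*$. In particular, for any fixed $\delta>0$ the mean curvature is bounded on $[t_*+\delta,T)$ by $M_\delta:=\sqrt\beta\coth(\sqrt\beta\,\delta)$, and $M_\delta$ depends only on $c_*$ and $\delta$ (with $n,\varepsilon$ regarded as fixed data of the soliton). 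Note that the terms $-C-\varepsilon u$ of the conservation law have cancelled out of $\dot M$, which is precisely what makes $M_\delta$ insensitive to $C$.

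Finally I would feed this into the conservation law. Writing \eqref{GeneralConservationLaw} as $\dot v=-v^2-Mv+(-C-\varepsilon u)$ and using $-\varepsilon u\ge0$ together with $M\le M_\delta$ on $[t_*+\delta,T)$ gives the autonomous inequality $\dot v\ge -v^2-M_\delta v+(-C)$ there. Choosing $\delta:=(\tau-t_*)/2$, let $q$ solve the corresponding Riccati equation with $q(t_*+\delta)=0\le v(t_*+\delta)$; then $v\ge q$, and $q$ increases monotonically towards its positive equilibrium $v_+=\tfrac12\bigl(-M_\delta+\sqrt{M_\delta^2-4C}\bigr)$, which is of order $\sqrt{-C}$. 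Explicit integration shows $q(\tau)\to\infty$ as $C\to-\infty$, since the transient decays at the rate $\sqrt{M_\delta^2-4C}\to\infty$ over the fixed window of length $(\tau-t_*)/2$. As $q$ is increasing, $v(t)\ge q(t)\ge q(\tau)$ for all $t\ge\tau$, so it suffices to pick $C_0\le0$ so that $q(\tau)\ge c$ whenever $\ddot u(0)<C_0$; the resulting $C_0$ depends only on $c_*$, $c$ and $\tau-t_*$. I expect the main obstacle to be exactly this quantitative build-up: one must show that the frozen Riccati solution already reaches height $c$ within the fixed time $(\tau-t_*)/2$ once $-C$ is large. This is where the $C$-independence of $M_\delta$ is indispensable, as it prevents the damping coefficient in $-M_\delta v$ from growing with $-C$ and absorbing the forcing term.
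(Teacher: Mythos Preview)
Your argument is correct, but it takes a genuinely different route from the paper's. The paper avoids bounding the mean curvature altogether: it adds the two available expressions for $\ddot u$---one coming from the traced soliton equation \eqref{CohomOneRSc} and one from the conservation law \eqref{GeneralConservationLaw}---so that the cross term $\dot u\,\tr(L)$ cancels and only the sign-favourable combination $\tr(L^2)-(\tr L)^2\le 0$ survives. This yields directly the single scalar inequality $\ddot u \le -a + \tfrac12\dot u^2$ on $[t_*,T)$, with $a$ large once $(d_S+1)\ddot u(0)+\tfrac{c_*}{2}+(n-1)\tfrac{\varepsilon}{2}<-a$, and comparison with the explicit $\tanh$ solution finishes the proof in one step. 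Your approach instead runs two Riccati comparisons in series: first $\dot M\le\beta-M^2$ to obtain a $C$-independent bound $M_\delta$ on $[t_*+\delta,T)$, then the conservation law with $M\le M_\delta$ to force growth of $v$. Both arguments hinge on the same structural inputs (the upper bound $\tr(r_t)\le\tfrac{c_*}{2}$ from the homogeneous scalar-curvature formula, and positivity of $L$), but the paper's algebraic cancellation short-circuits your first step entirely. What your route buys is a byproduct the paper does not state: an explicit $C$-independent a priori bound on the mean curvature of the principal orbit. What the paper's route buys is brevity and the observation that no information about $\tr(L)$ beyond its positivity is needed.
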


Notice that in applications to constructions of Ricci solitons the initial condition $\ddot{u}(0) < C_0$ can simply be imposed.

The proof of theorem \ref{MainGrowthEstimateTheorem} is given in section \ref{ProofGrowthTheoremSection}.

\begin{remarkroman}
The assumption of a monotypic isotropy representation can be relaxed. It suffices that the metric and the Ricci tensor can be diagonalised with respect to the {\em same} decomposition of the isotropy representation for all $t$ as in \eqref{AssumptionOnDecompositionOfMetric}.

It suffices to assume that the shape operator is positive definite on $[t_{*}, \tau]$ and moreover non-vanishing on $[t_{*},T)$ if $\varepsilon = 0.$ Indeed, the proof in section \ref{ProofGrowthTheoremSection} then shows that $- \dot{u}(t) \geq c$ holds at $t = \tau$ and the concavity of the soliton potential in proposition \ref{PotentialFunctionOfExpandingRSAlongCohomOneFlow} ensures that this estimate is preserved for $t \geq \tau.$ 

The theorem even applies to classes of manifolds which are just foliated by hyper-surfaces. These hypersurfaces may not be homogeneous but a similar structure theory has to apply. In particular the metric on the hypersurfaces has to satisfy a decomposition similar to \eqref{AssumptionOnDecompositionOfMetric} so that the Ricci soliton equations have the same form as if they came from a cohomogeneity one manifold.

The assumption of positive definiteness of the shape operator is natural in the sense that it holds in all currently known examples. In fact, propositions \ref{PosDefShapeOperatorNecessaryIfEpsZeroTwoSummands} and  \ref{PosDefShapeOperatorNecessaryIfEpsZeroCircleBundle} show that, in case of the geometries considered in this paper, positive definiteness of the shape operator is also a necessary condition for a maximal solutions of the steady Ricci solitons equations to induce a complete metric.
\label{RemarkOnGrowthEstimate}
\end{remarkroman}

In the applications in section \ref{ApplicationsSection}, the growth estimate will be applied in such a way to ensure that the shape operator indeed remains positive definite along the entire trajectory. This automatically implies completeness of the metric. 

\begin{proposition}
Let $\varepsilon \geq 0$ and let $(g_t, L_t,u)_{t \in (0,T)}$ be a maximal solution to the cohomogeneity one Ricci soliton equations corresponding to a smooth manifold with a singular orbit $Q$ at $t=0,$ metric $g_{M \setminus Q} = dt^2 + g_t$ and $\ddot{u}(0) \leq 0.$ Suppose that the shape operator $L_t$ of the principal hypersurface is positive definite for all $t>0.$

Then $T= \infty$ and the manifold is complete. 
\label{CompletenessForPositivityOfShapeOperator}
\end{proposition}
\begin{proof}
In the steady case proposition \ref{PotentialFunctionOfExpandingRSAlongCohomOneFlow} gives the bound $\tr(L) \leq \frac{n}{t}$ and hence the shape operator remains bounded along the flow. 

In the expanding case, due to the concavity of the soliton potential, the differential inequality 
\begin{align*}
\frac{d}{dt} \tr(L) \leq \frac{\varepsilon}{2} - \tr(L^2) \leq \frac{\varepsilon}{2} - \frac{1}{n} \tr(L)^2
\end{align*}
again shows that $\tr(L)$ and hence $L$ are bounded. Moreover, both arguments carry over to the Einstein case. 

The metric satisfies the {\em linear} ODE $\frac{d}{dt} g_t = 2 g_t L_t$ and hence cannot blow up in finite time either. 

In the soliton case, an estimate on the potential is also required. One may assume that $u(0)=0.$ Recall that the conservation law $\ddot{u} + (-\dot{u}+\tr(L)) \dot{u} = (d_S +1) \ddot{u}(0) + \varepsilon u$ needs to be satisfied, where $d_S$ is the dimension of the collapsing sphere. 

In the steady case, it immediately follows that $\dot{u}^2 \leq - (d_S +1) \ddot{u}(0)$ since $\dot{u}, \ddot{u} \leq 0.$ In the expanding case, concavity of the soliton potential implies $-u(t) \leq - \dot{u}(t) \cdot t.$ It follows that
\begin{align*}
\frac{d}{dt} (- \dot{u}) \leq - (d_S + 1) \ddot{u}(0) + \varepsilon (-\dot{u}) \cdot t - (- \dot{u})^2
\end{align*}
and therefore $- \dot{u} \geq 0$ cannot blow up in finite time. 
\end{proof}

\begin{proposition}
Complete cohomogeneity one steady Ricci solitons with a singular orbit and positive definite shape operator have nonnegative Ricci curvature.
\end{proposition}
\begin{proof}
With the notation of section \ref{PreliminariesSection}, if $N$ is a vector field orthogonal to all principal orbits $TP_t = \Phi \left( \lbrace t \rbrace \times P \right)$, i.e. $N = \Phi_{*} \left( \frac{\partial}{\partial t} \right),$ and $X, Y$ are tangent to all principal orbits, i.e. $X, Y \in TP_{t},$ then the formulas of Eschenburg-Wang \cite{EWInitialValueEinstein} for the Ricci curvature of cohomogeneity one manifolds and the Ricci soliton equations \eqref{CohomOneRSb}, \eqref{CohomOneRSc} show that
\begin{align*}
\Ric(X,Y) & = - g_t(\dot{L}_t(X),Y) - \tr(L_t) g_t(L_t(X),Y) + \Ric_t(X,Y) = - \dot{u}(t) g_t(L_t(X),Y),  \\
\Ric(N,N) & = - \tr( \dot{L}_t ) - \tr( L_t^2 ) =  - \ddot{u}(t).
\end{align*}
By assumption and proposition \ref{PotentialFunctionOfExpandingRSAlongCohomOneFlow}, it follows that $\Ric(X,Y),$ $\Ric(N,N) >0$ for all $t >0.$ Finally, due to results of Dancer-Wang \cite[Proposition 3.19]{DWCohomOneSolitons}, $\Ric(X,N) = 0$ is always satisfied due to the existence of a singular orbit.
\end{proof}

\subsection{Preparation for the proof: Scalar curvature of homogeneous spaces} 

The proof of theorem \ref{MainGrowthEstimateTheorem} involves an analysis of the scalar curvature $s_t=\tr(r_t)$ of the principal hypersurface $P_t.$ This relies on the structure theory for homogeneous spaces, cf. \cite{WZNonExistenceHomEinstein}, \cite{BesseEinstein}, \cite{PSInvariantHomEinstein}, \cite{BohmNonExistenceHomogeneousEinstein}, applied to the principal orbit $P=G/K.$ 

Due to the compactness of $G,$ there exists a biinvariant inner product $b$ on the Lie algebra $\mathfrak{g}$ of $G.$ Furthermore, $\mathfrak{g}$ decomposes into $\Ad_K$-invariant, $b$-orthogonal summands $\mathfrak{g}=\mathfrak{k} \oplus \mathfrak{p}$ and $G$-invariant metrics on $G/K$ correspond to $\Ad_K$-invariant inner products on $\mathfrak{p}.$ 

Let $B(X,Y) = \tr( \ad(X) \circ \ad(Y) )$ be the Killing form of $\mathfrak{g},$ $\langle \cdot, \cdot \rangle$ an $\Ad_K$-invariant inner product on $\mathfrak{p}$ and $e_{\alpha}$ an $\langle \cdot, \cdot \rangle$-orthonormal basis of $\mathfrak{p}.$ Then the scalar curvature $s$ of $G/K$ with respect to $\langle \cdot, \cdot \rangle$ is given by 
\begin{equation*}
s = - \frac{1}{4} \sum_{\alpha, \beta} \langle [ e_{\alpha}, e_{\beta}]_{\mathfrak{p}},  [ e_{\alpha}, e_{\beta}]_{\mathfrak{p}} \rangle - \frac{1}{2} \sum_{\alpha} B(e_{\alpha}, e_{\alpha}),
\end{equation*}
where $[\cdot, \cdot]_{\mathfrak{p}}$ is the projection of the Lie bracket onto $\mathfrak{p}.$ By further decomposing $\mathfrak{p},$ this description can be refined. Suppose that 
\begin{equation}
\mathfrak{p} = \mathfrak{p}_1 \oplus \ldots \oplus \mathfrak{p}_m
\label{DecompositionOfIsoRep}
\end{equation} 
is a decomposition of $\mathfrak{p}$ into $b$-orthogonal, $\Ad_K$-irreducible summands. In fact, for any $\Ad_K$-invariant inner product on $\mathfrak{p}$ there exists such a decomposition which moreover satisfies
\begin{equation*}
\langle \cdot , \cdot \rangle = x_1 b_{|\mathfrak{p}_1} \perp \ldots \perp x_m b_{|\mathfrak{p}_m}
\end{equation*}
for some $x_1, \ldots, x_m >0.$ This decomposition may not be unique. 

Suppose that the $\langle \cdot, \cdot \rangle$-orthonormal basis $\lbrace e_{\alpha} \rbrace$ is adapted to the decomposition \eqref{DecompositionOfIsoRep}. That is, $e_{\alpha} \in \mathfrak{p}_i$ for some $i$ and there holds $\alpha < \beta$ if $i < j.$ Then the coefficients defined by $A_{\alpha \beta}^{\gamma} = b( [ e_{\alpha}, e_{\beta}], e_{\gamma})$ satisfy $[ e_{\alpha}, e_{\beta}]_{\mathfrak{p}} = \sum_{\gamma} A_{\alpha \beta}^{\gamma} e_{\gamma}$ and the structure constants of the decomposition are given by $[i j k] = \sum ( A_{\alpha \beta}^{\gamma} )^2,$ where the sum is taken over all indices $\alpha,$ $\beta,$ $\gamma$ such that $e_{\alpha} \in \mathfrak{p}_i,$ $e_{\beta} \in \mathfrak{p}_j$ and $e_{\gamma} \in \mathfrak{p}_k.$ Notice that the structure constants $[i j k]$ depend on the decomposition of $\mathfrak{p}$ but are independent of the choice of the orthonormal basis. Furthermore, $A_{\alpha \beta}^{\gamma}$ is skew-symmetric in all three indices and hence $[i j k]$ is symmetric in all three indices. 

Let $d_i = \dim \mathfrak{p}_i$ and $B_{| \mathfrak{p}_i } = - b_i b_{|\mathfrak{p}_i}.$ Then it follows that $b_i \geq 0$ with $b_i = 0$ if and only if $\mathfrak{p} _i \subset \mathfrak{z}(\mathfrak{g})$ and $[i j k] \geq 0$ with $[i j k] = 0$ if and only if $b([\mathfrak{p}_i, \mathfrak{p}_j], \mathfrak{p}_k ) =0.$ Furthermore, the scalar curvature of $G/K$ with respect to the inner product $\langle \cdot, \cdot \rangle$ can now be written as 
\begin{equation}
s = - \frac{1}{4} \sum_{i,j,k} [ijk] \frac{x_k}{x_i x_j} + \frac{1}{2} \sum_{i=1}^m \frac{d_i b_i}{x_i}.
\label{ScalarCurvatureOfHomogeneousSpaceB}
\end{equation}
More generally, the Ricci operator is given by
\begin{equation}
r_{| \mathfrak{p}_i} = \left\lbrace \frac{1}{2} \frac{b_i}{x_i} - \frac{1}{2 d_i} \sum_{j,k=1}^m [ijk] \frac{x_k}{x_i x_j} + \frac{1}{4 d_i} \sum_{j,k = 1}^m [ijk] \frac{x_i}{x_j x_k} \right\rbrace  \id_{\mathfrak{p}_i}.
\label{RicciTensorHomogeneousSpace}
\end{equation}

\begin{remarkroman}
Wang-Ziller \cite{WZNonExistenceHomEinstein} have shown the following relation between the structure constants. For any $1 \leq i \leq m$ there holds
\begin{align*}
\sum_{j,k} [ijk] = d_i(b_i-2c_i),
\end{align*}
where the constants $c_i \geq 0$ describe the Casimir operator $\mathcal{C}_{\mathfrak{p}_i, b_{\mathfrak{k}}}.$ By definition one has $\mathcal{C}_{\mathfrak{p}_i, b_{\mathfrak{k}}} = - \sum_{i} \ad(z_i) \circ \ad(z_i)$ for any $b$-orthonormal basis $\lbrace z_i \rbrace$ of $\mathfrak{k}.$ However, due to the irreducibility condition it satisfies $\mathcal{C}_{\mathfrak{p}_i, b_{\mathfrak{k}}} = c_i \id_{\mathfrak{p}_i}.$ Moreover, one has $c_i = 0$ if and only if $\mathfrak{p}_i$ is contained in the subspace $\mathfrak{p}_0$ which satisfies $\Ad_{K  | \mathfrak{p}_0} = \id_{| \mathfrak{p}_0}.$ 
\end{remarkroman}

\subsection{Proof of the growth estimate theorem \ref{MainGrowthEstimateTheorem}}
\label{ProofGrowthTheoremSection} The assumption of a monotypic isotropy representation in theorem \ref{MainGrowthEstimateTheorem} guarantees that the {\em same} decomposition \eqref{DecompositionOfIsoRep} can be chosen to diagonalise the metric on the hypersurface for all $t.$ More generally, it is enough to assume that the metric is given by
\begin{equation}
g_t = f_1^2(t) b_{| \mathfrak{p}_1} + \ldots + f_m^2(t) b_{| \mathfrak{p}_m}
\label{AssumptionOnDecompositionOfMetric}
\end{equation}
for all $t.$ Notice that then the Ricci operator \eqref{RicciTensorHomogeneousSpace} also diagonalises with respect to the same decomposition. Moreover, the shape operator satisfies
\begin{equation*}
L = \diag \left( \frac{\dot{f}_1}{f_1} \id_{\mathfrak{p}_1}, \ldots, \frac{\dot{f}_m}{f_m} \id_{\mathfrak{p}_m} \right) 
\end{equation*}
since $\dot{g}_t = 2 g_t L_t.$ 

One may assume $f_i > 0$ and $u(0)=0.$ Then the shape operator $L$ is positive definite if and only if $\dot{f}_i > 0$ for all $i$ and the smoothness conditions of the soliton potential are given by \eqref{InitialConditionsPotentialFunction}. In particular, the constant $C$ in the conservation law \eqref{GeneralConservationLaw} is given by $C = (d_S+1) \ddot{u}(0).$

Observe that the Ricci soliton equations \eqref{CohomOneRSb}, \eqref{CohomOneRSc} imply
\begin{align*}
\ddot{u} = \tr(L^2) - (- \dot{u} + \tr(L) ) \tr(L) + \tr(r) + (n-1) \frac{\varepsilon}{2}.
\end{align*}

Combining this with the conservation law \eqref{GeneralConservationLaw}, it follows that on any $C^3$-regular cohomogeneity one Ricci soliton there holds
\begin{equation}
2 \ddot{u} = (d_S + 1) \ddot{u}(0) + \varepsilon u + \dot{u}^2 + \tr(L^2) - (\tr(L))^2 + \tr(r) + (n-1) \frac{\varepsilon}{2}.
\label{EquationForDDotU}
\end{equation}

In order to estimate the quantities in this equation, firstly recall that in the steady and expanding case, $\varepsilon \geq 0,$ the soliton potential is negative for all $t>0$ according to proposition \ref{PotentialFunctionOfExpandingRSAlongCohomOneFlow}. Secondly, it follows from the structure of the scalar curvature of $P_t$ in \eqref{ScalarCurvatureOfHomogeneousSpaceB} that $\tr(r_t) \leq \frac{1}{2} \sum_i \frac{d_i b_i}{f_i^2(t)}.$ Thirdly, observe that as long as the shape operator $L$ is positive definite one has $\dot{f}_i > 0$ and hence 
\begin{align*}
\tr(r_t) \leq \frac{1}{2} \sum_i \frac{d_i b_i}{f_i^2(t_{*})} = - \frac{1}{2} \tr_{g_{t_{*}}} (B_{| \mathfrak{p}})
\end{align*}
for all $t \geq t_{*}.$ Positive definiteness of $L$ also implies that $\tr(L^2) - (\tr(L))^2 \leq 0.$ Let $a>0$ and suppose that 
$(d_S+1) \ddot{u}(0) + \frac{c_{*}}{2} + (n-1) \frac{\varepsilon}{2}< - a.$ Then it follows that 
\begin{equation*}
\ddot{u}(t) \leq - a + \frac{1}{2} \dot{u}^2(t)
\end{equation*}
for all $t \geq t_{*}.$ It is useful to note that despite the singularity of \eqref{EquationForDDotU} at $t=0,$ if the assumption $-\tr_{g_{t_{*}}}(B_{| \mathfrak{p}}) < c_{*}$ already holds at $t_{*}=0,$ then the above estimate on $\ddot{u}$ also holds at $t=0.$

If $- \dot{u}(t_{*}) \geq c$ then the claim follows from the concavity of the soliton potential due to proposition \ref{PotentialFunctionOfExpandingRSAlongCohomOneFlow}. Otherwise, one can assume that $a > \frac{c^2}{2}$ and the claim then follows from the comparison principle for ODEs and remark \ref{ODEcomparisonGrowthPotentialFunction} below. $\hfill \Box$

\begin{remarkroman}
Let $a>0.$ The initial value problem 
\begin{align*}
y' = -a + \frac{y^2}{2} \ \text{ and } \ y(s_{*})=y_{*}
\end{align*}
is solved by $y(s)= \sqrt{2a} \tanh ( \sqrt{\frac{a}{2}} (s_{*} -s) + \operatorname{arctanh}( \frac{y_{*}}{\sqrt{2a}}) )$ if $-a+\frac{y_{*}^2}{2} <0.$

Fix $c>0,$ $s_0>s_{*}$ and suppose that $0 \leq -y_{*} \leq c.$ Then there exists an $a_0 \geq 0$ depending only on $c,$ $s_0-s_{*}$ such that for all $a>a_0$ the solution to the above initial value problem satisfies $-y(s) \geq c$ for all $s>s_0.$
\label{ODEcomparisonGrowthPotentialFunction}
\end{remarkroman}

This argument generalises an observation of Appleton \cite{AppletonSteadyRS} for the soliton potential of steady Ricci solitons on complex line bundles over Fano K\"ahler Einstein manifolds.

\section{Applications}
\label{ApplicationsSection}

In this section, theorem \ref{MainGrowthEstimateTheorem} will be used to construct new examples of complete steady and expanding Ricci solitons in the following geometric situations: the Dancer-Wang set-up \cite{DWCohomOneSolitons}, where previously only explicit K\"ahler Ricci solitons were known, the examples of L\"u-Page-Pope \cite{LuPagePopeQEinstein}, who described explicit Ricci flat metrics, and the two summands case \cite{WinkSolitonsFromHopfFibrations}, where now all low dimensional examples are covered. 

\subsection{Revision of the two summands case} This section revisits the construction of complete steady and expanding Ricci solitons in the two summands case and proves Theorem \ref{TheoremOnLowDimTwoSummands}. The method also recovers Ricci solitons which were previously constructed in \cite{WinkSolitonsFromHopfFibrations} with different techniques. The results can be summarised as follows:

\begin{theorem} 
On $CaP^2 \setminus \left\{ \text{ point } \right\},$ $\mathbb{H}P^{m+1} \setminus \left\{ \text{ point } \right\}$ for $m \geq 1$ and on the vector bundle associated to $(G,H,K) = (Sp(m+1), Sp(1) \times Sp(m),  U(1) \times Sp(m))$ for $m \geq 1,$ there exist a $1$-parameter family of non-homothetic complete steady and a $2$-parameter family of non-homothetic complete expanding Ricci solitons.

The steady Ricci solitons are asymptotically paraboloid and thus non-collapsed. The expanding Ricci solitons are asymptotically conical.
\label{SummaryRSInTwoSummandsCase}
\end{theorem}

The two summands case describes the setting in which the isotropy representation of $P=G/K$ decomposes into two inequivalent $\Ad_K$-invariant irreducible summands $\mathfrak{p} = \mathfrak{p}_1 \oplus \mathfrak{p}_2.$ Suppose that $b$ is a biinvariant metric on $G$ which induces the metric of constant curvature $1$ on $H/K.$ If $\mathfrak{p}_1$ and $\mathfrak{p}_2$ correspond to the tangent spaces of the collapsing sphere $H/K$ and the singular orbit $G/H,$ respectively, the metric on the principal orbit is given by 
\begin{equation*}
g_t = f_1(t)^2 b_{|\mathfrak{p}_1} + f_2(t)^2 b_{|\mathfrak{p}_2}
\end{equation*}
and the Ricci soliton equations take the form
\begin{align*}
\ddot{u} & = \sum_{i=1}^2 d_i \frac{\ddot{f}_i}{f_i} - \frac{\varepsilon}{2}, \\
\frac{d}{dt} \frac{\dot{f}_1}{f_1} & = - (- \dot{u} + \tr(L)) \frac{\dot{f}_1}{f_1} + \frac{\varepsilon}{2} + \frac{A_1}{d_1} \frac{1}{f_1^2} + \frac{A_3}{d_1} \frac{f_1^2}{f_2^4}, \\
 \frac{d}{dt} \frac{\dot{f}_2}{f_2} & = - (- \dot{u} + \tr(L)) \frac{\dot{f}_2}{f_2} + \frac{\varepsilon}{2} + \frac{A_2}{d_2} \frac{1}{f_2^2} - 2 \frac{A_3}{d_2} \frac{f_1^2}{f_2^4},
\end{align*}
where $d_i = \dim \mathfrak{p}_i$ and the constants $A_i \geq 0$ are determined by the Riemannian submersion $(G/K,b_{|\mathfrak{p}}) \to (G/H,b_{|\mathfrak{p}_2}),$ see \cite{BohmInhomEinstein}: If $|| A ||$ is the norm of the associated O'Neill tensor and $\Ric^{G/H}$ is the Einstein constant of $(G/H,b_{|\mathfrak{p}_2}),$ then one has $A_1 = d_1 (d_1-1),$ $A_2 = d_2 \Ric^{G/H}$ and $A_3 = d_2 || A ||^2.$ In the following,
\begin{align*}
A_2, \ A_3 > 0 \ \text{ and } \ A_1 = d_1(d_1-1) \geq 0
\end{align*}
will be assumed.

In order to smoothly extend the metric and the soliton potential over the singular orbit, impose the smoothness conditions
\begin{align}
& f_1(0)=0, \ \dot{f}_1(0)=1 \ \text{ and }  \
f_2(0)= \bar{f} > 0, \ \dot{f}_2(0)=0, \label{TwoSummandsRefinedInitialConitions} \\ 
& \hspace{23pt} u(0) = 0, \ \dot{u}(0)=0, \ (d_1 + 1) \ddot{u}(0) =  C \leq 0 \nonumber,
\end{align}
where $C$ is the constant appearing in the conservation law \eqref{GeneralConservationLaw}. Higher regularity of the solution then follows from elliptic regularity, cf. \cite{BuzanoInitialValueSolitons}. In particular, any solution is uniquely determined by fixing $\bar{f} > 0$ and $C \leq 0$ and if $C=0$ the trajectory is automatically Einstein. Recall that according to results of Chen \cite{ChenStrongUniquenessRF} any {\em complete} steady and expanding Ricci soliton with $u(0)=0$ necessarily satisfies $C \leq 0.$

Fix $\varepsilon \geq 0.$ Then $(d_1+1)\frac{\ddot{f}_2(0)}{f_2(0)} = \frac{\varepsilon}{2} + \frac{A_2}{d_2} \frac{1}{\bar{f}^2} >0$ implies that $\dot{f}_2(t) > 0$ holds for sufficiently small $t >0.$ A key step in the proof of completeness of the corresponding Ricci soliton metrics is to show that $\dot{f}_2(t) > 0$ holds in fact for all $t>0.$ Since $\dot{f}_1 >0$ is immediate, this also implies that the shape operator $L = \diag \left( \frac{\dot{f}_1}{f_1} \id_{d_1}, \frac{\dot{f}_2}{f_2} \id_{d_2} \right)$ is positive definite. In order to prove that $\dot{f}_2 > 0$ is preserved if $\varepsilon \geq 0$, it suffices to show that 
\begin{align*}
\omega = \frac{f_1}{f_2} < \sqrt{\frac{A_2}{2 A_3}}
\end{align*}
is preserved if $\varepsilon \geq 0.$ To this end, observe that $\omega$ satisfies
\begin{align}
\frac{d}{dt} \omega  = & \ \omega \left\lbrace \frac{\dot{f}_1}{f_1} - \frac{\dot{f}_2}{f_2} \right\rbrace, \nonumber \\
\frac{d^2}{dt^2} \omega = & \ \dot{\omega} \left\lbrace \frac{\dot{f}_1}{f_1} - \frac{\dot{f}_2}{f_2} - (-\dot{u} + \tr(L)) \right\rbrace  
+ \frac{\omega}{f_1^2} \left\lbrace \frac{A_1}{d_1} - \frac{A_2}{d_2} \omega^2 + A_3 \left( \frac{1}{d_1} + \frac{2}{d_2} \right)  \omega^4 \right\rbrace
\label{SecondDerivativeOfOmega}
\end{align}
and the initial conditions $\omega(0)=0$ and $\dot{\omega}(0)= \bar{f}^{-1}.$ Notice that the function
\begin{equation}
f( \omega ) = \frac{A_1}{d_1} - \frac{A_2}{d_2} \omega^2 + A_3 \left( \frac{1}{d_1} + \frac{2}{d_2} \right)  \omega^4
\label{ZeroOrderTermSecondDerivativeOmega}
\end{equation}
has two positive zeros $\omega_{1,2}^2 = \frac{A_2}{2A_3} \frac{d_1}{2d_1+d_2} \pm \sqrt{ D }$ if and only if the discriminant
\begin{equation}
D = \left( \frac{A_2}{2 A_3} \frac{d_1}{2d_1+d_2} \right)^2  - \frac{A_1}{A_3} \frac{d_2}{2d_1+d_2}
\label{DiskriminatConeSolutions}
\end{equation}
is positive. In this case it follows that $\omega_1^2 < \frac{A_2}{4A_3}$ and $\omega_2^2 < \frac{A_2}{2A_3}.$ In particular, $\dot{f}_2 >0$ as long as $\omega < \omega_2.$

Furthermore, the geometric condition $A_1=d_1 (d_1-1)$ implies that $\omega_1=0$ if and only if $d_1=1,$ which turns the case of a collapsing circle into a special case, see proposition \ref{CompletenessDimOneIsOneWithPotentialFunctionTrick}.

\begin{remarkroman}
According to \cite{WinkSolitonsFromHopfFibrations}, the asymptotic behaviour of complete Ricci soliton solutions with $\omega^2 < \frac{A_2}{2 A_3}$, and hence $L_t > 0,$ is linked to the existence of cone solutions, certain explicit solutions of the two summands Ricci soliton equations with conical singularities at the singular orbit. However, these cone solutions are real valued if and only if $D \geq 0$. In the steady case, proposition \ref{PosDefShapeOperatorNecessaryIfEpsZeroTwoSummands} implies that $D \geq 0$ is in fact a necessary condition for complete metrics to exists.

Inserting the definitions of the constants $A_1,$ $A_2,$ $A_3$ into \eqref{DiskriminatConeSolutions} one obtains
\begin{align*}
D \geq 0 \ \text{ if and only if } \ \frac{(\Ric^{G/H})^2}{4 ||A||^2} \geq (2d_1+d_2) \frac{d_1-1}{d_1}.
\end{align*} 
In particular, $D \geq 0$ is satisfied by all examples which are induced by the Hopf fibrations, cf. \cite{WinkSolitonsFromHopfFibrations}. 
%
%
%

Naturally, the discriminant $D$ also occurs in B\"ohm's \cite{BohmInhomEinstein, BohmNonCompactEinstein} work on Einstein metrics in the two summands case.
\label{RemarkOnDiskriminant}
\end{remarkroman}

\begin{lemma}
Suppose that $d_1>1,$ $A_1=d_1(d_1-1)$ and $D \geq 0.$ As long as $\dot{f}_2 > 0$ and $\omega < \omega_2,$ the following estimate holds
\begin{align*}
\ddot{\omega} 
\leq
\frac{d_1-1}{\omega}  \left( \frac{1}{\bar{f}^2} - \dot{\omega}^2 \right) + \dot{\omega}( \dot{u} - n \frac{\dot{f}_2}{f_2} ).
\end{align*}

In particular, if furthermore $C=0$ or $C < 0$ and $\varepsilon \geq 0,$ then $\dot{\omega} \geq \frac{1}{\bar{f}}$ implies that $\ddot{\omega} \leq 0,$ and thus the bound $\dot{\omega}(t) \leq \frac{1}{\bar{f}}$ holds for $t \geq 0$ and as long as $\omega(t) < \omega_2.$
\label{wBoundedByInitialValue}
\end{lemma}
\begin{proof}
The function $f(\omega) = \frac{2d_1+d_2}{d_1 d_2} A_3 (\omega_1^2 - \omega^2)(\omega_2^2 - \omega^2)$ of \eqref{ZeroOrderTermSecondDerivativeOmega} has a local maximum at the origin and in particular one has $f(\omega) \leq f(0) = \frac{A_1}{d_1}=d_1-1$ if $0 \leq \omega \leq \omega_2.$ Since $f_2$ is increasing there holds $f_2(t) \geq f_2(0) = \bar{f}$ for $t \geq 0.$ Then first claim follows from
\begin{equation*}
\ddot{\omega} = \frac{1}{\omega}  \left( \frac{f(\omega)}{f_2^2} - (d_1-1) \dot{\omega}^2 \right) + \dot{\omega}( \dot{u} - n \frac{\dot{f}_2}{f_2} ).
\end{equation*}
If $\varepsilon \geq 0$ and $C \leq 0,$ the soliton potential either vanishes identically or is strictly decreasing by proposition \ref{PotentialFunctionOfExpandingRSAlongCohomOneFlow}. In particular, there holds $\dot{u} \leq 0$ and this completes the proof.
\end{proof} 

Theorem \ref{SummaryRSInTwoSummandsCase} now follows from proposition \ref{CompletenessTwoSummandsWithPotentialFunctionTrick} below, which moreover  produces complete steady and expanding Ricci solitons if $D \geq 0$ and $d_1>1$ and hence covers all known geometric examples. The case of a collapsing circle $d_1=1$ will be considered in proposition \ref{CompletenessDimOneIsOneWithPotentialFunctionTrick}.

\begin{proposition}
Suppose that $d_1 > 1,$ $A_1=d_1(d_1-1)$, $D > 0$ and $\varepsilon \geq 0.$ For any $\bar{f}_0>0$ there exists a constant $C_0 \leq 0$ such that for any $C < C_0$ and any $\bar{f} > \bar{f}_0$ the steady and expanding two summands Ricci soliton metrics with initial condition \eqref{TwoSummandsRefinedInitialConitions} are complete. 

In particular, these give rise to a $1$-parameter family of complete steady and a $2$-parameter family of complete expanding Ricci solitons. 
\label{CompletenessTwoSummandsWithPotentialFunctionTrick}
\end{proposition}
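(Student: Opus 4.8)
The plan is to show that the maximal solution of \eqref{TwoSummandsRefinedInitialConitions} exists on $[0,\infty)$ and induces a geodesically complete metric; since $t$ is the arc length along the radial geodesics issuing from the singular orbit, this amounts to $T=\infty$ together with non-degeneracy of $g_t.$ Everything reduces to keeping the shape operator positive definite, that is $\dot f_2>0$ (the inequality $\dot f_1>0$ being automatic). Indeed, once $f_1,f_2$ are increasing for all $t,$ the mean curvature $\tr(L)$ is positive, and feeding the monotonicity of $u$ from proposition \ref{PotentialFunctionOfExpandingRSAlongCohomOneFlow} into the evolution equations for $\dot f_i/f_i$ bounds these quantities on every finite interval; standard ODE theory then yields $T=\infty$ and excludes collapse. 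By the discussion preceding lemma \ref{wBoundedByInitialValue} it suffices to keep $\omega=f_1/f_2<\omega_2,$ because this preserves $\omega<\sqrt{A_2/(2A_3)}$ and hence $\dot f_2>0,$ and because for $D>0$ the curvature term $f(\omega)$ is negative exactly on $(\omega_1,\omega_2).$

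Since theorem \ref{MainGrowthEstimateTheorem} presupposes positive definiteness of $L$ on $[t_*,T),$ which is what must be proved, I would argue by continuity. Let $T_0\in(0,T]$ be maximal with $\omega<\omega_2$ on $[0,T_0),$ and suppose $T_0<T.$ As $C\le 0$ and $\varepsilon\ge 0,$ lemma \ref{wBoundedByInitialValue} gives the elementary barrier $\dot\omega\le\bar f^{-1},$ so $\omega(t)\le\bar f^{-1}t$ and in particular $\omega<\omega_2$ on $[0,\bar f_0\omega_2);$ this provides a fixed interval, independent of $\bar f>\bar f_0,$ on which $L$ is positive definite. Fix $t_*\in(0,\bar f_0\omega_2).$ A short-time estimate, uniform in $\bar f>\bar f_0$ and using the repulsive term $A_1/(d_1 f_1^2)$ together with the smooth cap behaviour $f_1\sim t,$ bounds $f_1(t_*)$ from below; as $f_2(t_*)\ge\bar f_0,$ the quantity $-\tr_{g_{t_*}}(B)=d_1 b_1 f_1(t_*)^{-2}+d_2 b_2 f_2(t_*)^{-2}$ is then bounded by a constant $c_*$ depending only on $\bar f_0.$

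Applying theorem \ref{MainGrowthEstimateTheorem} to the trajectory restricted to $[t_*,T_0)$ gives, for any prescribed $c>0$ and $\tau\in(t_*,T_0),$ a threshold $C_0$ (depending only on $c_*,$ $c$ and $\tau-t_*$) such that $C\le C_0$ forces $-\dot u\ge c$ on $[\tau,T_0).$ Inserting this into lemma \ref{wBoundedByInitialValue} produces the damping $\dot\omega\,\dot u\le -c\,\dot\omega$ in the evolution of $\omega.$ The main obstacle is the quantitative claim that, for $c$ large enough, this damping keeps $\omega$ below $\omega_2$ uniformly in $\bar f>\bar f_0.$ The plan is to read the $\omega$-equation as an equation for $\dot\omega$ as a function of $\omega$: on $(\omega_1,\omega_2)$ the curvature term $f(\omega)/(\omega f_2^2)$ is nonpositive, whence $d\dot\omega/d\omega=\ddot\omega/\dot\omega\le -c,$ so that, starting from $\dot\omega\le\bar f^{-1}\le\bar f_0^{-1},$ the speed is driven to zero within an $\omega$-interval of length at most $(\bar f_0 c)^{-1}$ after $\omega$ enters $(\omega_1,\omega_2).$ Fixing $c$ so large that $(\bar f_0 c)^{-1}<\omega_2-\omega_1$ forces $\dot\omega$ to vanish before $\omega$ reaches $\omega_2,$ after which $\ddot\omega<0$ turns $\omega$ around, contradicting $\omega(T_0)=\omega_2.$ Since $c,$ $c_*$ and $t_*$ depend only on $\bar f_0,$ the resulting $C_0$ does too. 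Hence $T_0=T,$ positive definiteness persists, and completeness follows as in the first paragraph. The borderline case $D=0,$ where $\omega_1=\omega_2$ and the curvature repulsion degenerates so that only the damping is available, is the most delicate point and must be treated separately.

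Finally, for each fixed $\varepsilon\ge 0$ the pairs $(\bar f,C)$ with $\bar f>\bar f_0$ and $C\le C_0$ form a two-parameter family of complete Ricci solitons. Quotienting by the one-dimensional homothety symmetry leaves a one-parameter family of non-homothetic complete steady solitons when $\varepsilon=0;$ in the expanding case $\varepsilon>0$ contributes a further essential parameter, so that modulo homothety one obtains the asserted two-parameter family.
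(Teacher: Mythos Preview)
Your overall strategy is the same as the paper's: reduce completeness to the preservation of $\{\dot f_2>0,\ \omega<\omega_2\}$, use the a priori bound $\dot\omega\le 1/\bar f$ from lemma~\ref{wBoundedByInitialValue} to guarantee a definite time interval on which $L$ is positive definite, then apply theorem~\ref{MainGrowthEstimateTheorem} to force strong linear growth of $-\dot u$ and conclude that $\omega$ cannot reach $\omega_2$. The damping argument you give on $(\omega_1,\omega_2)$ via $d\dot\omega/d\omega\le -c$ is equivalent to the paper's integration of $\dot\omega(t)\le\dot\omega(t_*)\exp(u(t)-u(t_*))$ against the linear bound on $u$.

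There is, however, a genuine gap in how you feed the hypotheses of theorem~\ref{MainGrowthEstimateTheorem}. You fix $t_*$ once and for all in $(0,\bar f_0\omega_2)$ and then assert a ``short-time estimate, uniform in $\bar f>\bar f_0$'' that bounds $f_1(t_*)$ from below and hence controls $-\tr_{g_{t_*}}(B)=d_1b_1 f_1(t_*)^{-2}+d_2b_2 f_2(t_*)^{-2}$. You do not justify this, and it is not clear it holds uniformly in $C$: as $C\to-\infty$ the damping $\dot u\,\dot f_1$ in the $f_1$-equation becomes arbitrarily strong, and short-time smooth-cap asymptotics do not automatically survive uniformly over an unbounded parameter range. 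Without a uniform $c_*$ the output $C_0$ of theorem~\ref{MainGrowthEstimateTheorem} would depend on $C$ itself, which is circular.

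The paper sidesteps this completely by taking $t_*$ to be the \emph{trajectory-dependent} first time at which $\omega(t_*)=\omega_1$. Then $f_1(t_*)=\omega_1 f_2(t_*)\ge\omega_1\bar f>\omega_1\bar f_0$ is immediate, giving a uniform bound on $-\tr_{g_{t_*}}(B)$ with no further analysis. The point you are missing is that the constant $C_0$ produced by theorem~\ref{MainGrowthEstimateTheorem} depends only on $c_*$, $c$ and the \emph{difference} $\tau-t_*$, not on $t_*$ itself; since $\dot\omega\le 1/\bar f$ forces $\omega<\omega_2$ on $(t_*,t_*+\bar f_0(\omega_2-\omega_1))$, one can pick $\tau-t_*$ uniformly, and the resulting $C_0$ is independent of the trajectory. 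With this choice the integral estimate $\omega(t)\le\omega_1+\bar f_0^{-1}(\tau+c^{-1})<\omega_2$ closes the argument.
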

\begin{proof}
Completeness follows from proposition \ref{CompletenessForPositivityOfShapeOperator} once the invariance of the set 
\begin{align}
\left\{ \ \dot{f}_2 > 0 \ \text{ and } \ 0 < \omega < \omega_2 \ \right\}
\label{InvariantSetTwoSummands}
\end{align}
under the Ricci soliton ODE has been established for all $t>0.$ 

Observe that it directly follows from the two summands Ricci soliton ODE that $\omega < \omega_2$ implies $\dot{f}_2 > 0.$ It therefore remains to show that there are initial conditions as claimed so that $\omega$ cannot exceed $\omega_2$ if $\dot{f}_2 > 0.$ 

Recall that $A_1>0$ implies that $\omega_1 >0.$ Suppose there exist $t_{*},$ $\delta > 0$ such that $\omega < \omega_2$ on $[0, t_{*}],$ $\omega \leq \omega_1$ on $(t_{*}-\delta,t_{*})$ and $\omega > \omega_1$ on $(t_{*},t_{*} + \delta).$ Set 
\begin{align*}
t^{*} = \sup \ \lbrace \ t \geq t_{*} \ | \ \omega(s) \in [\omega_1, \omega_2] \ \text{ for all } \ s \in (t_{*}, t) \ \rbrace.
\end{align*}

Then $\omega(t_{*}) = \omega_1$ and the monotonicity of $f_2$ implies that $f_1(t_{*}) \geq \omega_1 \bar{f}.$ It follows a posteriori, by increasing $t_{*}$ slightly if necessary, that $\dot{\omega}(t_{*})>0$ can be assumed.

As long as $\omega_1 \leq \omega \leq \omega_2,$ the ODE \eqref{SecondDerivativeOfOmega} implies that
\begin{align*}
\frac{d}{dt} \dot{\omega}  \leq \dot{\omega} \left\lbrace \dot{u} - (d_1-1) \frac{\dot{f}_1}{f_1}-(d_2+1)\frac{\dot{f}_2}{f_2} \right\rbrace 
\end{align*}
and with lemma \ref{wBoundedByInitialValue} it follows that
\begin{align*}
\dot{\omega}(t) \leq \dot{\omega}(t_{*}) \frac{f_1^{d_1-1}(t_{*}) f_2^{d_2+1}(t_{*})}{f_1^{d_1-1}(t) f_2^{d_2+1}(t)} \exp( u(t)-u(t_{*}) ) \leq  \dot{\omega}(t_{*}) \leq \frac{1}{\bar{f}}
\end{align*}
for all $t \in (t_{*}, t^{*}).$ Notice in particular that $\omega < \omega_2$ holds on the interval $(t_{*},t_{*}+T),$ where $T=\bar{f}_0(\omega_2-\omega_1).$

Furthermore, due to the monotonicity of $f_1,$ $f_2$ one has 
\begin{equation}
- \tr_{g_t}(B_{|\mathfrak{p}}) = \sum_{i=1}^2 \frac{A_i}{f_i^2(t)} \leq \sum_{i=1}^2 \frac{A_i}{f_i^2(t_{*})} \leq \frac{A_1}{\omega_1^2 \bar{f}^2} + \frac{A_2}{\bar{f}^2} < \infty
\label{CheckAssumptionOnInitialConditionTwoSummands}
\end{equation}
for $t \geq t_{*}$ and as long as $\omega(t) < \omega_2.$

Now, according to theorem \ref{MainGrowthEstimateTheorem}, the following holds: For any $c>0,$ any $\tau \in (0,T)$ and any $\bar{f}_0>0$ there exists a constant $C_0 \leq 0$ such that along any two summands Ricci soliton trajectory whose initial conditions \eqref{TwoSummandsRefinedInitialConitions} satisfy $C<C_0$ and $\bar{f} > \bar{f}_0$ the soliton potential obeys the estimate
\begin{equation*}
u(t) \leq u(\tau+t_{*}) - c(t-(\tau+t_{*}))
\end{equation*}
for $t \geq \tau+t_{*}$ and as long as $\omega(t) < \omega_2.$ (In fact, due to the concavity properties of $u$ in proposition \ref{PotentialFunctionOfExpandingRSAlongCohomOneFlow}, the estimate actually holds for all $t \geq \tau+t_{*}.$)

The monotonicity of $f_1, f_2$ and $u$ and the a priori bound on $\dot{\omega}$ in lemma \ref{wBoundedByInitialValue} then imply that for $t \in [\tau +t_{*}, t^{*}]$ there holds
\begin{align*}
\omega(t) &
\leq \omega(\tau+t_{*}) + \dot{\omega}(t_{*}) \int_{\tau+t_{*}}^t \exp( u(s)-u(t_{*}) ) ds \\ 
& \leq \omega(\tau+t_{*}) +  \frac{\dot{\omega}(t_{*})}{c} \\
& \leq \omega_1 + \frac{1}{\bar{f}_0} \left( \tau + \frac{1}{c} \right) < \omega_2,
\end{align*}
if $c>0$ is chosen large enough and $\tau>0$ small enough. In particular, it follows that $t^{*}=\infty$ unless $\omega(t^{*})=\omega_1.$ 

However, observe that the choices of $c,$ $\tau >0$ and hence $C_0 \leq 0$ depend on $\bar{f}_0 >0$ but not on $t_{*}.$ This implies that the corresponding trajectories with $C < C_0$ indeed satisfy $\omega(t) < \omega_2$ for all $t \geq 0,$ which completes the proof. 
\end{proof}

\begin{remarkroman} 
(a) According to \cite[Proposition 2.6]{WinkSolitonsFromHopfFibrations}, one can actually choose $C_0=0$ in proposition \ref{CompletenessTwoSummandsWithPotentialFunctionTrick} if $d_1 >1$ and $(d_1+1)A_2^2 > 4 d_1 d_2 (2d_1+d_2) A_3.$ 

(b) The computation of the asymptotic behaviour of the Ricci soliton metrics in \cite{WinkSolitonsFromHopfFibrations} carries over to the metrics of Theorem \ref{TheoremOnLowDimTwoSummands} since they satisfy $L_t > 0$ and $\omega \leq \omega_2 < \frac{A_2}{2 A_3}$. In particular the steady Ricci solitons are asymptotically paraboloid and the expanding Ricci solitons are asymptotically conical.
\end{remarkroman}

In the steady case, in fact any complete metric must necessarily have positive definite shape operator:

\begin{proposition}
Let $d_1 \geq 1,$ $A_1 \geq 0$ and $\varepsilon = 0$ and let $(f_1(t),f_2(t),u(t))_{t \in [0, T)}$ be a maximal solution to the two summands Ricci solution ODE with initial condition \eqref{TwoSummandsRefinedInitialConitions}. Suppose there exists $t_0 >0$ such that $\dot{f}_2(t_0) < 0.$ 

Then $T < \infty$ and the associated metric is incomplete.
\label{PosDefShapeOperatorNecessaryIfEpsZeroTwoSummands}
\end{proposition}
\begin{proof}
It follows from \cite[Lemma 2.4]{WinkSolitonsFromHopfFibrations} that $\dot{f}_2(t) < 0$ for all $t \geq t_0.$ This implies $\dot{\omega}(t) > 0$ for all $t \geq t_0$ and hence there exists a constant $c_1>0$ such that
\begin{align*}
\frac{d}{dt} \frac{\dot{f}_1}{f_1} \geq - ( - \dot{u} + \tr(L) ) \frac{\dot{f}_1}{f_1} + c_1.
\end{align*}
Thus, by comparison, there exists a constant $c_2>0$ such that 
\begin{align*}
\frac{\dot{f}_1}{f_1} \geq c_1 \frac{e^u}{f_1^{d_1} f_2^{d_2}} + c_2,
\end{align*}
which implies 
\begin{align*}
\frac{d}{dt} \tr(L) = \ddot{u} - \tr(L^2) \leq - c_2^2 < 0
\end{align*}
due to proposition \ref{PotentialFunctionOfExpandingRSAlongCohomOneFlow}. 

However, if $T = \infty$ this contradicts the estimate $\tr(L) \to 0$ as $t \to \infty$ which holds on any complete steady cohomogeneity one Ricci soliton with a singular orbit according to proposition \ref{AsymptoticsOfPotentialFunction}.
\end{proof}

Notice that condition \eqref{CheckAssumptionOnInitialConditionTwoSummands} cannot hold at $t=0$ if $A_1>1$ because $f_1(0)=0$ implies that $\tr_{g_t}(B_{|\mathfrak{p}})$ diverges as $t \to 0.$ 

However, if $A_1 = 0$ one can actually choose $t_{*}=0$ and \eqref{CheckAssumptionOnInitialConditionTwoSummands} holds initially. Since $A_1 = d_1 (d_1-1)$ holds in geometric applications, the proof can hence be simplified if $d_1 =1.$ In this case, the two summands system can be used to construct Ricci soliton metrics on complex line bundles over Fano K\"ahler Einstein manifold, see \cite{WinkSolitonsFromHopfFibrations} for the set-up and examples of {\em non}-K\"ahler steady and expanding Ricci solitons. The steady Ricci solitons have independently been constructed by  Appleton \cite{AppletonSteadyRS} and Stolarski  \cite{StolarskiSteadyRSOnCxLineBundles}. Explicit {\em K\"ahler} Ricci solitons have been found earlier by Cao \cite{CaoSoliton} and Feldman-Ilmanen-Knopf \cite{FIKSolitons}.

In the case of steady Ricci solitons, the following argument is due to Appleton \cite{AppletonSteadyRS}, who moreover exhibits a non-collapsed Ricci soliton in this family. Stolarski \cite{StolarskiSteadyRSOnCxLineBundles} independently describes the steady Ricci solitons. Proposition \ref{CompletenessDimOneIsOneWithPotentialFunctionTrick} extends their results to the expanding case.

\begin{proposition}
Let $d_1=1,$ $A_1 =0$ and $\varepsilon \geq 0.$ For any $\bar{f}_0>0$ there exists a constant $C_0 \leq 0$ such that for all $C < C_0$ and $\bar{f}>\bar{f}_0$ the steady and expanding Ricci soliton trajectories with initial condition \eqref{TwoSummandsRefinedInitialConitions} give rise to families of complete Ricci soliton metrics on complex line bundles over Fano K\"ahler Einstein manifolds.

In particular, these families include examples of non-K\"ahler type.
\label{CompletenessDimOneIsOneWithPotentialFunctionTrick}
\end{proposition}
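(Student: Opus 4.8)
The plan is to mirror the structure of the proof of Proposition \ref{CompletenessTwoSummandsWithPotentialFunctionTrick}, but to exploit the crucial simplification afforded by $d_1=1$. Since $A_1 = d_1(d_1-1)=0$, the smaller root vanishes, $\omega_1=0$, so the invariant region $\{\dot{f}_2>0 \text{ and } 0<\omega<\omega_2\}$ from \eqref{InvariantSetTwoSummands} is now bounded by a single threshold $\omega_2$, and the ``first return time'' $t_{*}$ at which $\omega$ reaches $\omega_1$ collapses to $t_{*}=0$. First I would record that this makes condition \eqref{CheckAssumptionOnInitialConditionTwoSummands} hold already at $t=0$: indeed with $A_1=0$ one has $-\tr_{g_t}B = \tfrac{A_2}{f_2^2(t)} \leq \tfrac{A_2}{\bar f^2} < \tfrac{A_2}{\bar f_0^2}$ for all $t\geq 0$, so the Killing-form bound $-\tr_{g_0}B < c_{*}$ in Theorem \ref{MainGrowthEstimateTheorem} is satisfied with $c_{*}=A_2/\bar f_0^2$ uniformly over all $\bar f > \bar f_0$. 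This is precisely the feature that lets us take $t_{*}=0$ in the growth estimate.

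The main analytic step is then to verify that $\omega<\omega_2$ is preserved for all $t\geq 0$. I would proceed exactly as in Proposition \ref{CompletenessTwoSummandsWithPotentialFunctionTrick}: from the initial data $\omega(0)=0$, $\dot\omega(0)=\bar f^{-1}$ in \eqref{TwoSummandsRefinedInitialConitions} and Lemma \ref{wBoundedByInitialValue} (applicable since $D\geq 0$ holds automatically when $d_1=1$, as $A_1=0$), the condition $\dot\omega>\bar f^{-1}$ forces $\ddot\omega\leq 0$, so $\dot\omega(t)\leq \bar f^{-1}\leq \bar f_0^{-1}$ is maintained throughout the region $0\leq\omega\leq\omega_2$. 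Applying Theorem \ref{MainGrowthEstimateTheorem} with $t_{*}=0$ and $c_{*}=A_2/\bar f_0^2$, for any $c>0$ and any $\tau\in(0,T)$ there is a constant $C_0\leq 0$, depending only on $\bar f_0$ through $c_{*}$ and on the fixed geometric data, such that $C<C_0$ yields $-\dot u(t)\geq c$ on $[\tau,T)$, hence the linear-decay bound $u(t)\leq u(\tau)-c(t-\tau)$. Integrating $\dot\omega$ against $\exp(u(s))$ as in the previous proof then gives
\begin{align*}
\omega(t) \leq \omega(\tau) + \dot\omega(0)\int_{\tau}^{t}\exp(u(s)-u(0))\,ds \leq \frac{1}{\bar f_0}\left(\tau+\frac{1}{c}\right),
\end{align*}
which can be made strictly less than $\omega_2$ by choosing $c$ large and $\tau$ small. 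Since these choices, and therefore $C_0$, depend only on $\bar f_0$ (not on the trajectory), $\omega<\omega_2$ persists for all $t\geq 0$, giving positive definiteness of the shape operator for all time.

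With invariance of \eqref{InvariantSetTwoSummands} established, completeness of the metrics follows from the arguments of \cite{WinkSolitonsFromHopfFibrations} exactly as invoked in Proposition \ref{CompletenessTwoSummandsWithPotentialFunctionTrick}; the identification of the total space as a complex line bundle over the Fano K\"ahler Einstein base, and the interpretation of the initial data \eqref{TwoSummandsRefinedInitialConitions} as the smoothness conditions for a collapsing circle $H/K=S^1$, is the content of \cite{WinkSolitonsFromHopfFibrations}[Section 3.2.1]. Finally, to see that non-K\"ahler examples appear, I would note that the K\"ahler solitons form a lower-dimensional locus in the $(\bar f, C)$ parameter space (the K\"ahler condition imposes an additional constraint relating $C$ to $\bar f$), whereas the family produced here is genuinely parametrised by $\bar f>\bar f_0$ and $C<C_0$, so generic members cannot be K\"ahler. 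I expect the only genuine subtlety to be bookkeeping: confirming that the constant $C_0$ from Theorem \ref{MainGrowthEstimateTheorem} can be chosen uniformly in $\bar f>\bar f_0$, which hinges precisely on the uniform Killing-form bound $c_{*}=A_2/\bar f_0^2$ made available by $d_1=1$.
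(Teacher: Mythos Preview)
Your proposal is correct and follows essentially the same approach as the paper's own proof: you correctly identify that $d_1=1$ forces $\omega_1=0$ and $A_1=0$, which allows you to take $t_{*}=0$ in Theorem \ref{MainGrowthEstimateTheorem} with the uniform Killing-form bound $c_{*}=A_2/\bar f_0^2$, and then the integration argument for $\omega$ proceeds exactly as in Proposition \ref{CompletenessTwoSummandsWithPotentialFunctionTrick}. The only cosmetic difference is that the paper derives the bound $\dot\omega(t)\leq \dot\omega(0)\exp(u(t))$ directly from the ODE for $\omega$ rather than via Lemma \ref{wBoundedByInitialValue}, but this is the same estimate and your invocation of ``as in the previous proof'' covers it.
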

\begin{proof}
Recall that $A_1=0$ implies that $\omega_1=0$ and $\omega_2 = \frac{A_2}{A_3} \frac{d_1}{2d_1 + d_2}.$ As in the proof of proposition \ref{CompletenessTwoSummandsWithPotentialFunctionTrick}, it suffices to show that \eqref{InvariantSetTwoSummands} is preserved.

Clearly, $\omega < \omega_2$ still implies $\dot{f}_2 > 0$ and as $\omega(0)=0$ the condition $\omega < \omega_2$ holds initially. Moreover, since $d_1=1$ and $u(0)=0,$ one concludes as in the proof of proposition \ref{CompletenessTwoSummandsWithPotentialFunctionTrick} that
\begin{align*}
\dot{\omega}(t) \leq \dot{\omega}(0) \frac{f_2^{d_2+1}(0)}{f_2^{d_2+1}(t)} \exp( u(t) ) \leq \dot{\omega}(0) = \frac{1}{\bar{f}}
\end{align*}
as long as $\omega(t) < \omega_2.$ In particular, this condition is satisfied on $[0, \omega_2 \bar{f}).$ 

Furthermore, $- \tr_{g_t}(B_{| \mathfrak{p}}) = \sum_{i=1}^2 \frac{A_i}{f_i^2(t)} = \frac{A_2}{f_2^2(t)} \leq \frac{A_2}{\bar{f}^2} < \infty$ also holds as long as $\omega(t) < \omega_2.$ Thus, theorem \ref{MainGrowthEstimateTheorem} implies the following:

Let $\bar{f}_0 >0,$ $c>0$ and $\tau \in (0, \omega_2 \bar{f}_0).$ Then there exists a constant $C_0 \leq 0$ such that along any two summands Ricci soliton trajectory with $d_1=1,$ $A_1=0$ and $C<C_0,$ $\bar{f}>\bar{f}_0$ in \eqref{TwoSummandsRefinedInitialConitions}, the soliton potential satisfies $u(t) \leq u(\tau) - c(t-\tau)$ for $t \geq \tau$ and as long as $\omega(t) < \omega_2.$ 

Thus, one concludes that
\begin{align*}
\omega(t) 
 \leq \omega(\tau) + \dot{\omega}(0) \int_{\tau}^t \exp(u(s)) ds 
 \leq \frac{1}{\bar{f}} \left( \tau + \frac{1}{c} \right) < \omega_2
\end{align*}
for all $t \geq \tau$ if $c>0$ is chosen large enough and $\tau>0$ sufficiently small. In particular, in this case, $\omega(t) < \omega_2$ holds for all $t \geq 0.$ 
\end{proof}

\begin{remarkroman}
It follows from \cite[Proposition 2.13]{WinkSolitonsFromHopfFibrations} that proposition \ref{CompletenessDimOneIsOneWithPotentialFunctionTrick} holds with $C_0 = 0$ provided $A_2^2 > 2d_2(d_2+2) A_3.$ 
\end{remarkroman}

\subsection{Generalisations of the circle bundle case}
\label{CircleBundleCaseWithGrowthOfPotentialFunction}
In this section two generalisations of proposition \ref{CompletenessDimOneIsOneWithPotentialFunctionTrick} will be discussed.

\subsubsection{The Dancer-Wang set-up}
\label{DancerWangSetUpSection}
Extending previous work of Cao \cite{CaoSoliton} and Feldman-Ilmanen-Knopf \cite{FIKSolitons}, Dancer-Wang \cite{DWCohomOneSolitons} found explicit examples of K\"ahler Ricci solitons on complex line bundles over {\em products} of Fano K\"ahler Einstein manifolds. Their construction assumes that the Euler class of the line bundle is a rational linear combination of the first Chern classes of the base manifolds. However, the K\"ahler condition imposes strong restrictions on the coefficients. It will be shown that in fact on any of these line bundles there exist continuous families of steady and expanding Ricci solitons, thus proving Theorem \ref{RSOnLineBundlesOverProductsOfFanos}. The precise geometric set-up is the following:

Let $\left\lbrace (V_i,J_i, h_i) \right\rbrace _{i=1, \ldots, m}$ be Fano K\"ahler Einstein manifolds of real dimension $d_i.$ Then the first Chern classes are $c_1(V_i, J_i)= p_i \rho_i$ for positive integers $p_i$ and indivisible classes $\rho_i \in H^2(V_i, \mathbb{Z}).$ Suppose that the K\"ahler metrics $h_i$ are normalised such that $\Ric(h_i) = p_i h_i$ and let $\eta_i$ be the associated K\"ahler form. Choose $q_1, \ldots, q_m \in \Z \setminus \lbrace 0 \rbrace$ and consider the principal circle bundle $\pi \colon P \to B$ with Euler class $\sum_{i=1}^m q_{i} \pi_i^{*} \rho_i.$ Notice that on $P$ there exists a principal connection $\theta$ with curvature $\Omega = \sum_{i=1}^m q_{i} \pi_i^{*} \eta_i.$ 

Then the Ricci soliton equations on $I \times P$ with respect to the Riemannian metric $g = dt^2 + g_t,$ where $g_t = f^2(t) \theta \otimes \theta + \sum_{i=1}^m g_{i}^2(t) \pi_{i}^{*} h_{i},$ are given by
\begin{align}
\ddot{u} & \ =  \frac{\ddot{f}}{f} + \sum_{i=1}^m d_i \frac{\ddot{g}_i}{g_i} - \frac{\varepsilon}{2}, \label{CircleBundleRSequationI} \\
\frac{d}{dt} \frac{\dot{f}}{f} & \ = -  (- \dot{u} + \tr(L)) \frac{\dot{f}}{f} + \frac{\varepsilon}{2}  + \sum_{i=1}^m \frac{d_i q_i^2}{4} \frac{f^2}{g_i^4}, \label{CircleBundleRSequationII} \\
\frac{d}{dt} \frac{\dot{g}_i}{g_i} & \ =  - (-\dot{u} + \tr(L) ) \frac{\dot{g}_i}{g_i}   +  \frac{\varepsilon}{2} + \frac{p_i}{g_i^2} - \frac{q_i^2}{2} \frac{f^2}{g_i^4}, \label{CircleBundleRSequationIII}
\end{align}
for $i=1, \ldots, m,$ and where $L = \diag \left( \frac{\dot{f}}{f}, \frac{\dot{g}_1}{g_1} \mathbb{I}_{d_1}, \ldots, \frac{\dot{g}_m}{g_m} \mathbb{I}_{d_m} \right)$ 
is the shape operator of the principal hypersurface. A solution to these equations on $[0, \infty) \times P$ induces a complete Ricci soliton metric on the associated complex line bundle if the smoothness conditions
\begin{align}
f(0)=0, & \ \dot{f}(0)=1 \ \text{ and } \ g_i(0) >0, \ \dot{g}_{i}(0) = 0, \label{DancerWangInitialConditions} \\
 & \hspace{-0.6mm} u(0)=\dot{u}(0)=0, \ 2 \ddot{u}(0) = C \nonumber
\end{align}
are imposed, where $C$ is the constant in the conservation law \eqref{GeneralConservationLaw}. Higher regularity again follows from the work of Buzano \cite{BuzanoInitialValueSolitons}. Recall that if $\varepsilon \geq 0$ then $C \leq 0$ is a necessary condition to obtain a complete solution.

\vspace{2mm}

The metric on the associated complex line bundle is K\"ahler if $\frac{d}{dt} g_i^2 = - q_i f$ for $i=1, \ldots, m,$ which necessarily requires $q_i<0,$ cf. \cite{WWEinsteinS2Bundles}. 

\begin{remarkroman} For $\varepsilon \geq 0,$ notice that  \eqref{CircleBundleRSequationII} and \eqref{DancerWangInitialConditions} imply that $\frac{\dot{f}(t)}{f(t)} > 0$ for $t>0$ and as long as the solution exists. Proposition \ref{PotentialFunctionOfExpandingRSAlongCohomOneFlow} hence implies that any estimate $- \dot{u} \geq c$ is preserved along solutions of the Ricci soliton ODE with $\varepsilon \geq 0.$
\label{PotentialDecreasingInCircleBundleCase}
\end{remarkroman}

Fix $\varepsilon \geq 0$ and observe that $2 \frac{\ddot{g}_i(0)}{g_i(0)} = \frac{\varepsilon}{2} + \frac{p_i}{g_i^2(0)} >0$ implies that $\dot{g}_i(t)>0$ for sufficiently small $t>0.$

Suppose that $C_1 \geq 1$ is a constant to be specified later and that $Q_{ij} = \frac{g_i}{g_j}$ and $\omega_i = \frac{f}{g_i}$ are bounded by
\begin{equation}
Q_{ij} \leq C_1 \ \text{ and } \ \omega_i^2 \leq C_{2,i}^2 = \frac{1}{m C_1^2} \frac{4}{(d_i+2)q_i^2} \min_{k \in \lbrace1, \ldots, m\rbrace} \lbrace p_k \rbrace
\label{APrioriBounds}
\end{equation}
for all $i,j = 1, \ldots, m.$ Define
\begin{equation}
C_2 = \min_{i \in \lbrace 1, \ldots, m \rbrace} \lbrace C_{2,i} \rbrace.
\label{UniformAPrioriBoundOmega}
\end{equation} 

Notice that $\omega_i \leq C_2$ is automatically satisfied initially as $\omega_i(0)=0$ and that condition \eqref{APrioriBounds} already implies
\begin{align}
\frac{p_i}{g_i^2} - \frac{q_i^2}{2} \frac{f^2}{g_i^4} \geq \frac{d_i p_i}{d_i+2} \frac{1}{g_i^2}.
\label{KeyEstimateOnRHSinODEforGI}
\end{align}
In particular, as long as \eqref{KeyEstimateOnRHSinODEforGI} holds one has $\dot{g}_i > 0$ for $i=1, \ldots, m.$ The condition on $Q_{ij}$ can be satisfied initially by simply choosing $C_1 \geq 1$ sufficiently large and $g_i(0)$ accordingly.

\begin{remarkroman}
Notice that the a priori bound \eqref{APrioriBounds} also implies that all $Q_{ij}$ are bounded from below. 

If $m=1$ one can choose $C_1=1$ and the condition $\omega^2 < \frac{4p}{(d+2)q^2}$ is the same as in proposition  \ref{CompletenessDimOneIsOneWithPotentialFunctionTrick}.
\label{OptimalOmegaBoundIfMequalsOne}
\end{remarkroman}

It will be shown that for a suitable choice of initial conditions in \eqref{DancerWangInitialConditions} the bounds in \eqref{APrioriBounds} are preserved along the corresponding steady and expanding Ricci soliton trajectories.

Observe that $\omega_i= \frac{f}{g_i}$ satisfies the equations
\begin{align*}
\dot{\omega}_i = & \ \omega_i \left\lbrace \frac{\dot{f}}{f} - \frac{\dot{g}_i}{g_i} \right\rbrace \ \text{ and } \\
\ddot{\omega}_i =  
& \ \dot{\omega}_i \left\lbrace \dot{u} - (d_i+1) \frac{\dot{g}_i}{g_i} - \sum_{j \neq i} d_j \frac{\dot{g}_j}{g_j}  \right\rbrace 
+ \frac{\omega_i}{g_i^2} \left\lbrace \frac{q_i^2}{2} \omega_i^2 - p_i +  \sum_{j=1}^m \frac{d_j q_j^2}{4}  \frac{f^2}{g_j^2} \frac{g_i^2}{g_j^2} \right\rbrace.
\end{align*}

\begin{remarkroman}
Any critical point of $\omega_i$ along a solution which satisfies \eqref{APrioriBounds} is a local maximum. Indeed, one has 
\begin{align*}
\frac{d_i+2}{4} q_i^2 \omega_i^2 - p_i + \sum_{j \neq i} \frac{d_j q_j^2}{4} \frac{f^2}{g_j^2} \frac{g_i^2}{g_j^2} 
& \ \leq \frac{d_i+2}{4} q_i^2 \omega_i^2 - p_i + \frac{m-1}{m} \min \left\lbrace p_k \right\rbrace \\
& \ \leq \left( \frac{1}{m C_1^2} + \frac{m-1}{m} \right) \min \left\lbrace p_k \right\rbrace - p_i \leq 0
\end{align*}
and equality implies $m=1,$ $C_1=1$ and equality in $\eqref{APrioriBounds},$ in which case $\omega_1$ is maximal by assumption. Therefore $\dot{\omega}_i$ can change its sign at most once. In particular, if the solution is defined for all times $t \geq 0,$ then $\omega_i$ converges as $t \to \infty.$
\label{ConvergenceOmegaI}
\end{remarkroman}

\begin{lemma} Let $\varepsilon \geq 0,$ $C_1 \geq 1$ and fix $\bar{g}>0,$ $\varepsilon_0 \in (0, C_2],$ where $C_2$ is defined as in \eqref{UniformAPrioriBoundOmega}. Then there exists a constant $C_0 \leq 0$ such that along any Ricci soliton trajectory with $C < C_0$ and $g_i(0) > \bar{g}$ for all $i=1, \ldots, m$ in \eqref{DancerWangInitialConditions} the estimate $\omega_i < \varepsilon_0$ is preserved for all $i=1, \ldots, m$ as long as $Q_{ij}(t) \leq C_1.$
\label{OmegaSmall}
\end{lemma}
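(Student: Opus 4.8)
The plan is to mirror the proof of Proposition~\ref{CompletenessDimOneIsOneWithPotentialFunctionTrick} (the case $m=1$): first establish an a priori bound $\dot{\omega}_i \leq \bar{g}^{-1} e^{u}$ by an integrating-factor argument, and then integrate this against the decay of the soliton potential furnished by Theorem~\ref{MainGrowthEstimateTheorem}. To set this up, I would rewrite the second-order equation for $\omega_i$ displayed just above the statement. Using $\frac{\dot{f}}{f} = \frac{\dot{\omega}_i}{\omega_i} + \frac{\dot{g}_i}{g_i}$ and $\tr(L) = \frac{\dot{f}}{f} + \sum_j d_j \frac{\dot{g}_j}{g_j}$, the bracket multiplying $\dot{\omega}_i$ becomes $\frac{d}{dt}\Psi_i$ with $\Psi_i = u - (d_i+1)\ln g_i - \sum_{j \neq i} d_j \ln g_j$, and because the collapsing fibre is a circle the quadratic term $\dot{\omega}_i^2/\omega_i$ cancels identically, exactly as in the case $d_1 = 1$. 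Abbreviating $B_i = \frac{q_i^2}{2}\omega_i^2 - p_i + \sum_{j} \frac{d_j q_j^2}{4} \frac{f^2}{g_j^2}\frac{g_i^2}{g_j^2}$, this gives $\ddot{\omega}_i = \dot{\omega}_i\, \frac{d}{dt}\Psi_i + \frac{\omega_i}{g_i^2} B_i$.

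The key step, and the main obstacle, is to show that the a priori bounds \eqref{APrioriBounds} force $B_i \leq 0$ for every $i$ simultaneously. Estimating the first term by $\frac{q_i^2}{2}\omega_i^2 \leq \frac{2}{m C_0^2 (d_i+2)}\min_k p_k$ and each summand by $\frac{d_j q_j^2}{4}\frac{f^2}{g_j^2}\frac{g_i^2}{g_j^2} \leq \frac{d_j}{m(d_j+2)}\min_k p_k$ --- here the factor $C_0^2$ from $Q_{ij}^2 \leq C_0^2$ cancels the $C_0^{-2}$ built into the bound on $\omega_j$ --- one obtains, after using $\min_k p_k \leq p_i$, that $B_i \leq \frac{2 \min_k p_k}{m}\left( \frac{1}{C_0^2(d_i+2)} - \sum_{j=1}^m \frac{1}{d_j+2}\right) \leq 0$, the final inequality because $C_0 \geq 1$ and the $j=i$ summand alone dominates $\frac{1}{C_0^2(d_i+2)}$. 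This is precisely the multi-factor analogue of the computation $\frac{(d+2)q^2}{4}\omega^2 < p$ underlying Proposition~\ref{CompletenessDimOneIsOneWithPotentialFunctionTrick}, and it is where the particular normalisation of \eqref{APrioriBounds} is essential; the estimate \eqref{KeyEstimateOnRHSinODEforGI} is the single-factor shadow of it.

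With $B_i \leq 0$ in hand, the function $\dot{\omega}_i\, e^{-\Psi_i}$ is non-increasing, since its derivative equals $\frac{\omega_i}{g_i^2} B_i\, e^{-\Psi_i} \leq 0$. Comparing with the limit $t \to 0^{+}$, where $\dot{\omega}_i(0) = g_i(0)^{-1}$ and $u(0) = 0$ by \eqref{DancerWangInitialConditions}, and using that each $g_j$ is increasing (as $\dot{g}_j > 0$ by \eqref{KeyEstimateOnRHSinODEforGI}) together with $g_i(0) > \bar{g}$, I would conclude $\dot{\omega}_i(t) \leq \bar{g}^{-1} e^{u(t)}$. Since $\omega_i(0) = 0$, integration yields $\omega_i(t) \leq \bar{g}^{-1}\int_0^t e^{u(s)}\, ds$.

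Finally I would invoke Theorem~\ref{MainGrowthEstimateTheorem} with $t_{*} = 0$: the collapsing circle contributes nothing to the Killing form, so $-\tr_{g_t}B \leq \sum_i d_i p_i / g_i^2 \leq \sum_i d_i p_i / \bar{g}^2 =: c_{*}$ is bounded already at $t=0$ by monotonicity of the $g_i$, while $L$ is positive definite ($\dot{f}>0$ is immediate and $\dot{g}_i > 0$ by \eqref{KeyEstimateOnRHSinODEforGI}). Hence for any $c>0$ and $\tau \in (0,T)$ there is a $C_1 < 0$, depending only on $c_{*}, c, \tau$, such that $C < C_1$ forces $-\dot{u} \geq c$ on $[\tau, T)$ and therefore $u(s) \leq -c(s-\tau)$ for $s \geq \tau$, recalling $u \leq 0$ from Proposition~\ref{PotentialFunctionOfExpandingRSAlongCohomOneFlow}. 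Then $\int_0^t e^{u(s)}\, ds \leq \tau + 1/c$, so $\omega_i(t) \leq \bar{g}^{-1}(\tau + 1/c)$. Choosing $c$ large and $\tau$ small enough that $\bar{g}^{-1}(\tau + 1/c) < \varepsilon_0$ fixes the required $C_1$ and gives $\omega_i(t) < \varepsilon_0$ for all $i$ as long as \eqref{APrioriBounds} holds.
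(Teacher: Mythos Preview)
Your argument is correct and follows essentially the same route as the paper's proof: show that the reaction term $B_i$ in the second-order equation for $\omega_i$ is non-positive under \eqref{APrioriBounds}, deduce via the integrating factor $e^{-\Psi_i}$ that $\dot{\omega}_i \leq g_i(0)^{-1} e^{u}$, then invoke Theorem~\ref{MainGrowthEstimateTheorem} at $t_{*}=0$ and integrate. Your explicit verification that $B_i \leq 0$ (via the telescoping identity $\frac{d_j}{d_j+2} = 1 - \frac{2}{d_j+2}$ after bounding each summand using $\omega_j^2 Q_{ij}^2 \leq \frac{4}{m(d_j+2)q_j^2}\min_k p_k$) is a detail the paper simply asserts, so you have in fact supplied more than is written there.
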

\begin{proof}
Observe that as long as \eqref{APrioriBounds} is satisfied the second term in the formula for $\ddot{\omega}_i$ is nonpositive. Since $\dot{\omega}_i(0) = \frac{1}{g_i(0)},$ this implies as before that 
\begin{align*}
\dot{\omega}_i(t) \leq \frac{g_{i}^{d_i}(0) \prod_{j \neq i} g_{j}^{d_j}(0)}{g_{i}^{d_i+1}(t) \prod_{j \neq i} g_{j}^{d_j}(t)} \exp(u(t)) \leq \frac{1}{g_i(0)} < \frac{1}{\bar{g}}
\end{align*}
for $t \geq 0$ and as long as \eqref{APrioriBounds} holds.  Observe that the bound $\omega_i \leq C_2$ clearly holds on an interval $[0,T],$ where $T>0$ only depends on $\bar{g}>0$ and $C_2>0.$

In order to apply theorem \ref{MainGrowthEstimateTheorem}, note that the Ricci soliton ODE and \eqref{KeyEstimateOnRHSinODEforGI} imply positive definiteness of the shape operator, and that 
\begin{align*}
- \tr_{g_t} (B_{|\mathfrak{p}}) = \sum_{i=1}^m \frac{d_i p_i}{g_i^2(t)} \leq \sum_{i=1}^m \frac{d_i p_i}{g_i^2(0)} \leq \overline{C}(d_i, p_i, \bar{g}) < \infty
\end{align*}
is uniformly bounded in terms of $\bar{g}>0$ and geometric or topological data. Hence, due to theorem \ref{MainGrowthEstimateTheorem} and remark \ref{PotentialDecreasingInCircleBundleCase}, for all $c > 0$ and all $\tau \in (0,T)$ there exists a constant $C_0 \leq 0$ such that for all $C < C_0$ in \eqref{DancerWangInitialConditions} one has $- \dot{u}(t) \geq c$ for all $t>\tau$. This implies 
\begin{align*}
\omega_i(t) \leq \omega_i(\tau)+ \dot{\omega}_i(0) \int_{ \tau }^t \exp(u(s))ds \leq \frac{1}{g_i(0)} \left( \tau + \frac{1}{c} \right)
\end{align*}
and hence for sufficiently small $\tau>0$ and sufficiently large $c > 0,$ the condition $\omega_i < \varepsilon_0$ is preserved along the Ricci soliton trajectory with $Q_{ij}(t) \leq C_1.$
\end{proof}

Now the existence of an a priori bound for the quotient $Q_{ij} = \frac{g_i}{g_j}$ will be established. Observe that 
\begin{align*}
\dot{Q}_{ij} = & \ Q_{ij} \left\lbrace \frac{\dot{g}_i}{g_i} - \frac{\dot{g}_j}{g_j} \right\rbrace \ \text{ and } \\
\ddot{Q}_{ij} = 
& \ \dot{Q}_{ij} \left\lbrace \dot{u} - \tr(L) + \frac{\dot{g}_i}{g_i} - \frac{\dot{g}_j}{g_j} \right\rbrace + Q_{ij}  \left\lbrace \frac{p_i}{g_i^2} - \frac{q_i^2}{2} \frac{f^2}{g_i^4} - \left( \frac{p_j}{g_j^2} - \frac{q_j^2}{2} \frac{f^2}{g_j^4} \right) \right\rbrace.
\end{align*}

It is straightforward to check that the initial conditions 
\begin{align*}
\dot{Q}_{ij}(0)=0 \ \text{ and } \ \ddot{Q}_{ij}(0) = \frac{3}{2} Q_{ij}(0) \left\lbrace \frac{p_i}{g_i^2(0)} - \frac{p_j}{g_j^2(0)} \right\rbrace
\end{align*}
hold. 


As a first step, an a priori estimate on $\dot{Q}_{ij}$ will be derived:
\begin{lemma}
As long as \eqref{KeyEstimateOnRHSinODEforGI} holds, one has the estimate 
\begin{align*}
\ddot{Q}_{ij} \leq & \ \dot{Q}_{ij} \left\lbrace \dot{u} - \tr(L) + \frac{\dot{g}_i}{g_i} - \frac{\dot{g}_j}{g_j} \right\rbrace + Q_{ij} \frac{p_i}{g_i^2} \\
= & \ \dot{Q}_{ij} \left\lbrace \dot{u} - \sum_{k \neq i,j} d_k \frac{\dot{g}_k}{g_k} - (d_i +d_j) \frac{\dot{g}_j}{g_j} \right\rbrace + \frac{1}{Q_{ij}} \left\lbrace \frac{p_i}{g_j^2} - (d_i-1) \dot{Q}_{ij}^2 \right\rbrace.
\end{align*}

In particular, if furthermore $C=0$ or $C < 0$ and $\varepsilon \geq 0,$ then $\dot{Q}_{ij}(t_0) > \sqrt{ \frac{p_i}{d_i-1} \frac{1}{g_j^2(0)} }$ implies $\ddot{Q}_{ij}(t_0) < 0$ and hence the a priori bound 
\begin{align*}
\dot{Q}_{ij}(t) \leq \max \left\lbrace \sqrt{ \frac{p_i}{d_i-1} \frac{1}{g_j^2(0)} } \right\rbrace.
\end{align*}
\label{APrioriBoundOnDotQij}
\end{lemma}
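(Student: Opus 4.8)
The plan is to read off the displayed estimate from the exact second-order ODE for $Q_{ij}$ recorded immediately above the statement, discarding only manifestly favourable terms. For the first inequality I would look at the bracket multiplying $Q_{ij}$ in that exact formula, namely
\[
\left(\frac{p_i}{g_i^2} - \frac{q_i^2}{2}\frac{f^2}{g_i^4}\right) - \left(\frac{p_j}{g_j^2} - \frac{q_j^2}{2}\frac{f^2}{g_j^4}\right).
\]
The subtracted group is bounded below by $\frac{d_j p_j}{d_j+2}\frac{1}{g_j^2}>0$ by \eqref{KeyEstimateOnRHSinODEforGI} applied with index $j$, and $-\frac{q_i^2}{2}\frac{f^2}{g_i^4}$ is nonpositive; hence the whole bracket is at most $\frac{p_i}{g_i^2}$, and since $Q_{ij}>0$ the first line of the display follows.

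Next I would reorganise this right-hand side into the second-line form. Using $Q_{ij}=g_i/g_j$ gives $Q_{ij}\frac{p_i}{g_i^2}=\frac{1}{Q_{ij}}\frac{p_i}{g_j^2}$, and the identity $\frac{\dot g_i}{g_i}-\frac{\dot g_j}{g_j}=\dot{Q}_{ij}/Q_{ij}$ lets me substitute $\frac{\dot g_i}{g_i}=\frac{\dot g_j}{g_j}+\dot{Q}_{ij}/Q_{ij}$ and expand $\tr(L)=\frac{\dot f}{f}+\sum_k d_k\frac{\dot g_k}{g_k}$, which trades the cross term for $-(d_i-1)\dot{Q}_{ij}^2/Q_{ij}$. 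The only term that does not cancel in this rearrangement is $-\dot{Q}_{ij}\frac{\dot f}{f}$; since positive-definiteness of the shape operator (equivalently \eqref{KeyEstimateOnRHSinODEforGI}) forces $\dot f>0$, this contribution is nonpositive exactly when $\dot{Q}_{ij}\geq 0$, which is the only regime used below, so it may be dropped to reach the second-line bound. (Thus the displayed identity is exact up to this single nonpositive $\dot f/f$-contribution, and remains an upper bound for $\ddot{Q}_{ij}$ in the relevant range.)

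For the \emph{in particular} assertion I would fix $t_0$ with $\dot{Q}_{ij}(t_0)>\sqrt{\frac{p_i}{d_i-1}\frac{1}{g_j^2(0)}}$ (note $d_i=2n_i\geq 2$, so $d_i-1>0$) and treat the two pieces of the second-line bound separately. Because \eqref{KeyEstimateOnRHSinODEforGI} gives $\dot g_k>0$ for every $k$ and Proposition \ref{PotentialFunctionOfExpandingRSAlongCohomOneFlow} gives $\dot u\leq 0$ under $C=0$ or ($C<0$, $\varepsilon\geq 0$), the linear factor $\dot u-\sum_{k\neq i,j}d_k\frac{\dot g_k}{g_k}-(d_i+d_j)\frac{\dot g_j}{g_j}$ is $\leq 0$, and multiplied by $\dot{Q}_{ij}>0$ it stays nonpositive. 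Monotonicity $g_j(t_0)\geq g_j(0)$ (again from $\dot g_j>0$) combined with the assumed lower bound on $\dot{Q}_{ij}(t_0)$ makes the quadratic piece $\frac{1}{Q_{ij}}\left(\frac{p_i}{g_j^2}-(d_i-1)\dot{Q}_{ij}^2\right)$ strictly negative, so $\ddot{Q}_{ij}(t_0)<0$.

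Finally I would upgrade this to the a priori bound by a barrier argument on $\phi:=\dot{Q}_{ij}$, writing $B:=\sqrt{\frac{p_i}{d_i-1}\frac{1}{g_j^2(0)}}$. Since $\phi(0)=0<B$ and $\phi'=\ddot{Q}_{ij}<0$ at every time where $\phi>B$, the function $\phi$ can never cross above $B$: if $\phi(t_1)>B$ for some $t_1$, then letting $t_0$ be the last time before $t_1$ with $\phi(t_0)=B$, the inequality $\phi'<0$ on $(t_0,t_1)$ forces $\phi(t_1)<\phi(t_0)=B$, a contradiction. The main obstacle is the bookkeeping in the second paragraph, above all keeping track of the stray $\dot f/f$-term and checking that every discarded quantity has the claimed sign by way of \eqref{KeyEstimateOnRHSinODEforGI}; the sign analysis and the barrier argument are then routine.
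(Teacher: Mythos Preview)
Your argument is correct and matches the paper's approach; the paper itself merely says ``the first part is a straightforward calculation'' and then invokes $\dot g_k>0$ from \eqref{KeyEstimateOnRHSinODEforGI} together with $\dot u\leq 0$ from Proposition~\ref{PotentialFunctionOfExpandingRSAlongCohomOneFlow}, exactly as you do. One small point of notation: the stray $-\dot Q_{ij}\,\dot f/f$ term you isolate is in fact meant to be absorbed into the sum $\sum_{k\neq i,j} d_k\,\dot g_k/g_k$ under the convention $d_0=1$, $g_0=f$ (used explicitly in the proof of Proposition~\ref{CompletenessDancerWangSetUp}), so the displayed ``$=$'' is a genuine identity; your handling of it as an extra nonpositive contribution when $\dot Q_{ij}>0$ is of course equally valid for the conclusion.
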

\begin{proof}
The first part is a straightforward calculation. 
%
For the second part recall that \eqref{KeyEstimateOnRHSinODEforGI} implies that $\dot{g}_i>0$ for $i=1, \ldots, m$ and that for steady and expanding Ricci solitons the soliton potential is decreasing according to proposition \ref{PotentialFunctionOfExpandingRSAlongCohomOneFlow}.
\end{proof}

\begin{lemma}
Fix $\varepsilon \geq 0$ and $\overline{G}>0.$ Set $C_1 = \max \left\lbrace \overline{G}, \sqrt{\frac{d_j+2}{d_j} \frac{p_i}{p_j}} \right\rbrace  + 1$ and define $C_2>0$ as in \eqref{UniformAPrioriBoundOmega}. Then for every $\bar{g}>0$ there exists a constant $C_0 \leq 0$ such that along any Ricci soliton trajectory with $Q_{ij}(0) \leq \overline{G}$, $g_i(0) > \bar{g}$ for $i,j=1, \ldots, m$ and $C < C_0$ in \eqref{DancerWangInitialConditions} the a priori bound \eqref{APrioriBounds} is preserved, and in fact $\omega_i \leq C_2$ for $i=1, \ldots, m.$
\label{APrioriBoundDWSetUpPreserved}
\end{lemma}
\begin{proof}
By assumption, \eqref{APrioriBounds} is satisfied initially. Choose $\delta > 0$ small. Lemma \ref{OmegaSmall} implies that there are initial conditions so that $\omega_i \leq C_2- \delta$ is preserved as long as $Q_{ij} \leq C_1$. Therefore it suffices to show that for a suitable subset of these initial conditions the estimate $Q_{ij} \leq C_1 -\delta$ is preserved as long as $\omega_i \leq C_2.$

Recall how the initial conditions in lemma \ref{OmegaSmall} are chosen: Consider $\varepsilon_0 = C_2- \delta$ in lemma \ref{OmegaSmall}. As the proof shows, for any $c>0$ and any small $\tau_0 > 0$ there exists a constant $C{'} \leq 0$ such that along any Ricci soliton trajectory with $C < C{'}$ and $g_i(0) > \bar{g}$ for all $i=1, \ldots, m$ in \eqref{DancerWangInitialConditions} one has $- \dot{u}(t) \geq c$ for all  $t > \tau_0$. Moreover, this implies that  for a sufficiently small choice of $\tau_0 > 0$ and a sufficiently large choice of $c>0$ the estimate $\omega_i \leq C_2 - \delta$ is preserved as long as $Q_{ij}(t) \leq C_1.$

In a next step, it will be shown that for a suitable choice of initial conditions, the estimate $Q_{ij} \leq C_1 - \delta$ cannot be violated as long as $\omega_i \leq C_2$ for $i=1, \ldots, m.$ Observe that, as long as \eqref{APrioriBounds} is satisfied, the equation for $\ddot{Q}_{ij}$ and \eqref{KeyEstimateOnRHSinODEforGI} imply that 
\begin{align*}
\ddot{Q}_{ij} \leq \dot{Q}_{ij} \left\lbrace \dot{u} - \tr(L) + \frac{\dot{g}_i}{g_i} - \frac{\dot{g}_j}{g_j} \right\rbrace + \frac{Q_{ij}}{g_i^2} \left\lbrace p_i - \frac{d_j p_j}{d_j+2} Q_{ij}^2 \right\rbrace.
\end{align*}
%
Now suppose that $Q_{ij}(t) \geq \sqrt{\frac{d_j+2}{d_j} \frac{p_i}{p_j}}$ for $t>t_0$ and $t_0 \geq 0$ is minimal with that property. It follows a posteriori that, by increasing $t_0$ slightly if necessary, $\dot{Q}_{ij}(t_0) > 0$ can be assumed and then 
\begin{align*}
\dot{Q}_{ij}(t) \leq \dot{Q}_{ij}(t_0) \frac{\left(g_i^{d_i-1}g_j^{d_j+1} \prod_{k \neq i,j}g_k^{d_k}\right)(t_0)}{\left(g_i^{d_i-1}g_j^{d_j+1} \prod_{k \neq i,j}g_k^{d_k}\right)(t)} \exp( u(t) - u(t_0) ) \leq \sqrt{\frac{p_i}{d_i-1} \frac{1}{g_j^2(0)}}
\end{align*}
holds for $t \geq t_0.$ Notice in particular that the bound $Q_{ij} \leq C_1 $ is still satisfied on $[t_0,t_0+T),$ where one can choose
$T = \bar{g} \cdot \max \lbrace \frac{d_i-1}{p_i} \rbrace^{-1/2}>0$ due to the definition of $C_1.$

Recall that according to \eqref{APrioriBounds} the shape operator is positive definite and that the bound $- \tr_{g_t} (B_{|\mathfrak{p}}) \leq \sum_{i=1}^m \frac{d_i p_i}{g_i^2(0)}$ was established in the proof of lemma \ref{OmegaSmall} for $t \geq 0.$

Fix $\tau_0 < T$ and apply theorem \ref{MainGrowthEstimateTheorem} with $c_{*}=\sum_{i=1}^m \frac{d_i p_i}{\bar{g}^2},$ $c>0,$ $t_{*}=t_0$ and $\tau=\tau_0+t_0.$ Due to remark \ref{PotentialDecreasingInCircleBundleCase} this yields a constant $C{''} \leq 0$ such that along any Ricci soliton trajectory with $C < C{''}$ and $g_i(0) > \bar{g}$ for all $i=1, \ldots, m$ in \eqref{DancerWangInitialConditions} one has $- \dot{u}(t) \geq c$ for all $t > \tau$. It follows that
\begin{align*}
Q_{ij}(t) & \leq Q_{ij}(\tau) + \dot{Q}_{ij}(t_0) \int_{\tau}^t \exp( u(s)-u(t_0) ) ds \\
& \leq Q_{ij}(\tau) + \frac{\dot{Q}_{ij}(t_0)}{c} \\
& \leq Q_{ij}(t_0) +  \sqrt{\frac{p_i}{d_i-1} \frac{1}{g_j^2(0)}} \left( \tau-t_0 + \frac{1}{c} \right) \leq C_1 - \delta
\end{align*}
for a sufficiently small choice of $\tau-t_0=\tau_0>0$ and a sufficiently large choice of $c>0.$

It remains to show that there is a choice of initial conditions, in particular of $\tau_0>0$ and $c>0$, that similarly works in the proof of lemma \ref{OmegaSmall}. Recall that the constants $C{'}, C{''} \leq 0$ in theorem \ref{MainGrowthEstimateTheorem}, for which the linear growth rate $- \dot{u} \geq c$ is guaranteed if $\ddot{u}(0) < C{'}, C{''},$ respectively, only depend on $c_{*}, c>0$ and the difference $\tau - t_{*}=\tau_0>0,$ but neither on $t_{*}$ nor $\tau.$ Hence there is a uniform choice of $\tau_0 \in (0,T)$ and $c>0$ which implies that \eqref{APrioriBounds} is preserved along the Ricci soliton trajectory provided that $\ddot{u}(0)< C \leq C_0 = \min \lbrace C{'}, C{''} \rbrace \leq 0$ and $g_i(0) > \bar{g}$ for all $i=1, \ldots, m.$
\end{proof}

Completeness of the Ricci soliton metrics now follows from lemma \ref{APrioriBoundDWSetUpPreserved} and proposition \ref{CompletenessDancerWangSetUp} below.

\begin{proposition} Let $\varepsilon \geq 0$ and consider a maximal Einstein or Ricci soliton trajectory in the Dancer-Wang set-up with initial conditions \eqref{DancerWangInitialConditions} and $\ddot{u}(0) \leq 0.$ Suppose that $\frac{f^2}{g_i^2}< \frac{2 p_i}{q_i^2}$ holds along the entire trajectory if $q_i \neq 0.$ 

Then this trajectory is defined for all $t \geq 0$ and corresponds to a complete Einstein or Ricci soliton metric.
\label{CompletenessDancerWangSetUp}
\end{proposition}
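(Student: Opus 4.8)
The plan is to reduce the whole statement to showing that the maximal interval of existence is $[0,\infty)$. Indeed, since $f$ and the $g_i$ are smooth and the orbit metrics $g_t$ form a smooth family on the compact principal orbit $P$, a solution defined on all of $[0,\infty)\times P$ automatically induces a complete metric, as recorded right after \eqref{DancerWangInitialConditions}. So let $[0,T_{\max})$ be the maximal interval and suppose for contradiction that $T_{\max}<\infty$. First I would record the monotonicity the hypotheses buy us. Because $\tfrac{f^2}{g_i^2}<\tfrac{2p_i}{q_i^2}$ holds along the trajectory whenever $q_i\neq 0$, the forcing term $\tfrac{p_i}{g_i^2}-\tfrac{q_i^2}{2}\tfrac{f^2}{g_i^4}$ in the equation for $\tfrac{\dot g_i}{g_i}$ is positive (and it is positive automatically when $q_i=0$); together with $\dot g_i(0)=0$ and the structure $\tfrac{d}{dt}\tfrac{\dot g_i}{g_i}=-(-\dot u+\tr L)\tfrac{\dot g_i}{g_i}+(\text{positive})$ this forces $\dot g_i>0$ for all $t>0$. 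The same argument with the manifestly non-negative forcing term $\tfrac{\varepsilon}{2}+\sum_i\tfrac{d_i q_i^2}{4}\tfrac{f^2}{g_i^4}$ gives $\dot f>0$, so $L$ is positive definite. Moreover $\ddot u(0)\leq 0$ means $C\leq 0$, whence Proposition \ref{PotentialFunctionOfExpandingRSAlongCohomOneFlow} yields $\dot u\leq 0$ and $u\leq 0$ for all $t\geq 0$ (with equalities in the Einstein case $C=0$).

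Next I would derive uniform bounds on $[\delta,T_{\max})$ for a fixed $\delta\in(0,T_{\max})$; working away from $0$ is essential since $\tfrac{\dot f}{f}\sim 1/t$ near the singular orbit. Monotonicity gives $g_i\geq g_i(0)$, so every forcing term in the logarithmic-derivative equations is bounded: $\tfrac{p_i}{g_i^2}\leq\tfrac{p_i}{g_i(0)^2}$, while $\tfrac{f^2}{g_i^4}=\tfrac{1}{g_i^2}\cdot\tfrac{f^2}{g_i^2}\leq\tfrac{1}{g_i(0)^2}\cdot\tfrac{2p_i}{q_i^2}$ by the standing hypothesis. Since $-\dot u+\tr L>0$ and $\tfrac{\dot f}{f},\tfrac{\dot g_i}{g_i}\geq 0$, the drift terms $-(-\dot u+\tr L)(\cdot)$ are non-positive, so each logarithmic derivative obeys $\tfrac{d}{dt}(\cdot)\leq\text{const}$ and hence stays bounded on $[\delta,T_{\max})$. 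This bounds $\tr L$ and $\tr(L^2)$, and integrating the bounds on $\tfrac{\dot f}{f},\tfrac{\dot g_i}{g_i}$ shows that $f$ and $g_i$ grow at most exponentially, so they remain bounded above and below on $[\delta,T_{\max})$.

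It then remains to bound $\dot u$, equivalently $v:=-\dot u+\tr L>0$. Here I would use the conservation law in the form \eqref{ReformulatedGeneralConsLaw}, $v^2=\tr(r_t)+\tr(L^2)+(n-1)\tfrac{\varepsilon}{2}-C-\varepsilon u$. From \eqref{ScalarCurvatureOfHomogeneousSpaceB} the scalar curvature satisfies $\tr(r_t)\leq-\tfrac12\tr_{g_t}B=\tfrac12\sum_i\tfrac{d_ip_i}{g_i^2}\leq\tfrac12\sum_i\tfrac{d_ip_i}{g_i(0)^2}$, and $\tr(L^2)$ was bounded above, so $v^2\leq B_0-\varepsilon u$ for a constant $B_0$. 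In the steady case $\varepsilon=0$ this already bounds $v$. In the expanding case I would set $w=-\varepsilon u\geq 0$; then $\dot w=\varepsilon(v-\tr L)\leq\varepsilon v\leq\varepsilon\sqrt{B_0+w}$, so $\tfrac{d}{dt}\sqrt{B_0+w}\leq\tfrac{\varepsilon}{2}$, whence $w$ stays bounded on $[\delta,T_{\max})$; this bounds $v$, hence $\dot u$ and $u$. With $f,g_i,u$ and the quantities $\tfrac{\dot f}{f},\tfrac{\dot g_i}{g_i},\dot u$ all bounded on $[\delta,T_{\max})$ and $g_i$ bounded away from $0$, the solution of the first-order system extends past $T_{\max}$, contradicting maximality. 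Hence $T_{\max}=\infty$ and the metric is complete.

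I expect the last step to be the main obstacle: the term $-\varepsilon u$ in \eqref{ReformulatedGeneralConsLaw} grows as the potential decreases, so the bound on $v$ is not immediate and must be closed by the Grönwall-type estimate on $\sqrt{B_0-\varepsilon u}$. All the remaining bounds follow cheaply from the monotonicity of $f,g_i$ and the non-positivity of the drift terms in the logarithmic-derivative equations.
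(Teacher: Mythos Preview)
Your argument is correct and takes a genuinely different route from the paper's proof. The paper introduces the rescaled time $\frac{d}{ds}=\mathcal{L}\frac{d}{dt}$ with $\mathcal{L}=(-\dot u+\tr L)^{-1}$ and the normalised variables $X_0=\mathcal{L}\frac{\dot f}{f}$, $X_i=\mathcal{L}\frac{\dot g_i}{g_i}$, $Y_0=\frac{\mathcal{L}}{f}$, $Y_i=\frac{\mathcal{L}}{g_i}$; the preserved loci of Remark~\ref{PreservedRSandEinsteinLoci} (i.e.\ the conservation law \eqref{ReformulatedGeneralConsLaw}) then force $\sum d_jX_j^2+\sum\frac{d_jp_j}{2}Y_j^2+(n-1)\frac{\varepsilon}{2}\mathcal{L}^2\leq 1$, so the rewritten system is polynomial with a priori bounded trajectories and hence exists for all $s$. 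The paper must then argue separately, citing \cite{WinkSolitonsFromHopfFibrations}, that $t\to\infty$ as $s\to\infty$. By contrast you stay in the original $t$-variable, bound the forcing terms by monotonicity of the $g_i$, bound the logarithmic derivatives via the non-positive drift, and close the estimate on $-\dot u+\tr L$ through the conservation law with a Gr\"onwall bound on $\sqrt{B_0-\varepsilon u}$. Your approach is more elementary and self-contained (no reparametrisation, no external reference for the final step), while the paper's rescaled variables and invariant loci carry more structural information that is useful elsewhere in the analysis. One minor point: in this non-homogeneous setting you should justify the scalar-curvature bound directly from the Ricci components of the system (giving $\tr(r_t)=\sum_i\frac{d_ip_i}{g_i^2}-\sum_i\frac{d_iq_i^2}{4}\frac{f^2}{g_i^4}\leq\sum_i\frac{d_ip_i}{g_i(0)^2}$) rather than invoking \eqref{ScalarCurvatureOfHomogeneousSpaceB}, but this does not affect the argument.
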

\begin{proof}
It is immediate that the shape operator remains positive definite along the trajectories and hence proposition \ref{CompletenessForPositivityOfShapeOperator} yields the claim.
\end{proof}

\begin{remarkroman}
In the situation of proposition \ref{CompletenessDancerWangSetUp}, the associated Einstein and Ricci soliton loci of remark \ref{PreservedRSandEinsteinLoci} define bounded regions in an associated phase space: Consider the variables
\begin{align*}
\mathcal{L} = \frac{1}{-\dot{u}+\tr(L)} \ \text{ and } \ X_0  = \mathcal{L} \cdot \frac{\dot{f}}{f}, \ X_i  = \mathcal{L} \cdot \frac{\dot{g}_i}{g_i} \ \text{ and } \
Y_0 =  \frac{\mathcal{L}}{f}, \ Y_i = \frac{\mathcal{L}}{g_i} \ \text{ and } \ \omega_i = \frac{Y_i}{Y_0}
\end{align*}
for $i =1, \ldots, m.$ Since $\omega_i^2 < \frac{2 p_i}{q_i^2}$, the conservation law \eqref{ReformulatedGeneralConsLaw} implies
\begin{align*}
\sum_{i=0}^m d_i X_i^2 + \sum_{i=1}^m \frac{d_i p_i}{2} Y_i^2 + (n-1) \frac{\varepsilon}{2} \mathcal{L}^2 \leq 1.
\end{align*}
Thus the variables $X_i, Y_i, \omega_i$ are indeed all bounded, and $\mathcal{L}$ is also bounded if $\varepsilon >0.$ This can also be used to give an alternative proof of proposition \ref{CompletenessDancerWangSetUp}: With the rescaling $\frac{d}{ds} = \mathcal{L} \cdot \frac{d}{dt}$ the Ricci soliton ODE takes the form
\begin{align*}
\frac{d}{ds} \mathcal{L} = & \ \mathcal{L} \left( \sum_{j=0}^m d_j X_j^2 - \frac{\varepsilon}{2} \mathcal{L}^2  \right), \\\frac{d}{ds} X_0 = & \ X_0 \left( \sum_{j=0}^m d_j X_j^2 - \frac{\varepsilon}{2} \mathcal{L}^2 -1 \right) + \frac{\varepsilon}{2} \mathcal{L}^2 + \sum_{j=1}^m \frac{d_j q_j^2}{4} \omega_j^2 Y_j^2, \\
\frac{d}{ds} X_i = & \ X_i \left( \sum_{j=0}^m d_j X_j^2 - \frac{\varepsilon}{2} \mathcal{L}^2 -1 \right) +\frac{\varepsilon}{2} \mathcal{L}^2 + p_i Y_i^2 - \frac{q_i^2}{2} \omega_i^2 Y_i^2 \hspace{4mm} \text{ for } \ i=1, \ldots, m, \\
\frac{d}{ds} Y_i = & \ Y_i \left( \sum_{j=0}^m d_j X_j^2 - \frac{\varepsilon}{2} \mathcal{L}^2  - X_i \right)
\hspace{44.3mm}  \text{ for } \ i=0, \ldots, m, \\
\frac{d}{ds} \omega_i = & \ \omega_i \left( X_0 - X_i \right)
\hspace{69.7mm}  \text{ for } \ i=1, \ldots, m.
\end{align*}
Since the variables remain bounded along the flow and the rescaled ODE system has a polynomial right hand side, the corresponding maximal solution is defined for all times $s.$ Similarly, the ODE for $\mathcal{L}$ shows that $\mathcal{L}$ cannot blow up in finite time. The original time slice, the soliton potential and the metric can be recovered via 
\begin{align*}
t(s) = t(s_0) + \int_{s_0}^s \mathcal{L} (\tau) d \tau \ \text{ and } \ \dot{u} = \frac{\sum_{j=0}^m d_j X_j- 1}{\mathcal{L}} \ \text{ and } \ f = \frac{\mathcal{L}}{Y_0} \ \text{ and } \ g_{i}= \frac{\mathcal{L}}{Y_i}.
\end{align*}

To show completeness of the metric, it remains to prove that $t \to \infty$ as $s \to \infty.$ Notice that the variables $\mathcal{L}, X_i, Y_i, \omega_i$ are by definition all positive initially, and the assumption $\omega_i^2 < \frac{2 p_i}{q_i}$ implies that positivity is preserved along the Ricci soliton ODE.  The arguments in \cite[Proposition 2.8 and Corollary 2.11]{WinkSolitonsFromHopfFibrations} now show that $t \to \infty$ as $s \to \infty.$

Notice that in fact $\mathcal{L} \to \frac{1}{\sqrt{-C}}$ as $s \to \infty$ due to proposition \ref{AsymptoticsOfPotentialFunction}. Hence $\mathcal{L}$ is also bounded in the case of non-trivial steady Ricci solitons.
\label{CompletenessSOneBundleViaBoundedPhaseSpaceRegion}
\end{remarkroman}

In the case of steady Ricci solitons, positive definiteness of the shape operator is actually a necessary condition for completeness.

\begin{proposition}
Let $\varepsilon = 0$ and suppose that $(f(t),g_i(t),u(t))_{t \in [0,T)}$ is a maximal solution to the Ricci soliton equations \eqref{CircleBundleRSequationI} - \eqref{CircleBundleRSequationIII} which satisfies the initial conditions \eqref{DancerWangInitialConditions} with $\ddot{u}(0) \leq 0.$ Suppose that $T = \infty.$ 

Then there holds $\dot{f}(t),$ $\dot{g}_i(t) > 0$ for all $t > 0.$ 
\label{PosDefShapeOperatorNecessaryIfEpsZeroCircleBundle}
\end{proposition}
\begin{proof}
It is clear that $\dot{f} > 0$ is preserved. Suppose that there exists $i_{0} \in \left\lbrace 1, \ldots, m \right\rbrace $ and $t_0 > 0$ such that $g_{i_0}(t_0) < 0.$ An argument analogous to the proof of \cite[Lemma 2.4]{WinkSolitonsFromHopfFibrations} shows that $g_{i_0}(t) < 0$ for all $t \geq t_0. $ As in the proof of proposition \ref{PosDefShapeOperatorNecessaryIfEpsZeroTwoSummands}, a comparison argument shows that $\frac{\dot{f}}{f}$ is bounded away from zero, which implies that the mean curvature decreases with definite speed. On the other hand, this is impossible as the mean curvature must tend to zero as $t \to \infty$ according to proposition \ref{AsymptoticsOfPotentialFunction}.
\end{proof}

The asymptotic behaviour of the steady Ricci solitons is given as follows:

\begin{proposition}
The steady Ricci soliton metrics with \eqref{APrioriBounds} are complete and if furthermore $\sum_{i=1}^m d_i +2 < m \left( \min \left\lbrace d_j \right\rbrace + 2 \right) C_1^2$ it follows that
\begin{align*}
- \dot{u}(t) \to \sqrt{-C} \ \text{ and } \ \frac{g_i^2(t)}{t} \to \frac{2p_i}{\sqrt{-C}} \ \text{ and } \ f(t) \text{ converges }
\end{align*}
as $t \to \infty.$
\label{AsymptoticsSteadyRSinProductsOverFanos}
\end{proposition}
\begin{proof}
In the steady case $f$ and $g_i$ satisfy the ODE system
\begin{align*}
\ddot{f} & = - (- \dot{u} + \tr(L)) \dot{f} - \frac{\dot{f}^2}{f} 
+ \sum_{i=1}^m \frac{d_i q_i^2}{4} \frac{\omega_i^4}{f}, \\
\ddot{g}_i & = - (- \dot{u} + \tr(L)) \dot{g_i} - \frac{\dot{g}_i^2}{g_i} 
+ \frac{p_i - \frac{q_i^2}{2}\omega_i^2}{g_i}.
\end{align*}
Due to \eqref{APrioriBounds} and remark \ref{ConvergenceOmegaI}, the limit $\omega_{i, \infty} = \lim_{t \to \infty} \omega_i(t)$ exists for all $i.$ Moreover, the shape operator remains positive definite along the trajectories. Hence proposition \ref{AsymptoticsOfPotentialFunction} implies that $\frac{\dot{f}}{f},$ $\frac{\dot{g}_i}{g_i} \to 0$ and $-\dot{u} \to \sqrt{-C}$ as $t \to \infty.$  If $\omega_{i, \infty} > 0$ for some $i,$ then due to work of Appleton \cite[Lemma 6.2]{AppletonSteadyRS} it follows that for all small $\varepsilon > 0$ there is $t_0 >0$ such that
\begin{align}
f(t_0)^2 + \gamma_{-} (1+ \varepsilon)^{-1} (t-t_0) \leq & \ f^2(t) \leq f(t_0)^2 + \gamma_{+} (t - t_0), \nonumber \\
g_i(t_0)^2 + \Gamma_{i,-} (1+ \varepsilon)^{-1} (t-t_0) \leq & \ g_i^2(t) \leq g_i(t_0)^2 + \Gamma_{i,+} (t - t_0), \label{AsymptoticsGi}
\end{align}
where 
\begin{equation*}
 \gamma_{\pm} = \frac{\sum_{i=1}^m \frac{d_i q_i^2}{2} \omega_{i, \infty}^2 \pm \varepsilon}{\sqrt{-C} \mp \varepsilon}
 \ \text{and} \
\Gamma_{i, \pm} = \frac{2 p_i - q_i^2 \omega_{i, \infty}^2}{\sqrt{-C} \mp \varepsilon}.
\end{equation*}
This yields $\frac{\gamma_{-}}{\Gamma_{i,+}} \leq \omega_{i, \infty}^2 \leq \frac{\gamma_{+}}{\Gamma_{i,-}}$ for all small $\varepsilon >0,$ hence
\begin{align*}
\left( p_i - \frac{q_i^2}{2} \omega_{i, \infty}^2 \right) \omega_{i, \infty}^2 = \sum_{j=1}^m \frac{d_jq_j^2}{4} \omega_{j, \infty}^4
\end{align*}
and in particular $\omega_{i, \infty}>0$ for all $i.$ Notice that this implies
\begin{align*}
\sum_{j=1}^m d_j \omega_{j, \infty}^2 \left\lbrace  p_j - \left( \sum_{i=1}^m d_i + 2 \right) \frac{q_j^2}{4} \omega_{j, \infty}^2 \right\rbrace  = 0.
\end{align*}

In the domain $\lbrace \ \omega_{j, \infty}^2 \leq \frac{4p_j}{\left( \sum_{i=1}^m d_i + 2\right) q_j^2} \ \vert \ j=1, \ldots, m \ \rbrace $  the unique solution to this equation is $\omega_{j, \infty}^2 = \frac{4p_j}{\left( \sum_{i=1}^m d_i + 2 \right) q_j^2}$ for all $j.$ However, by assumption, the functions $\omega_i$ satisfy an even stronger bound. Thus $\omega_{i, \infty} = 0$ holds for those the trajectories.

The estimate \eqref{AsymptoticsGi} is still valid since $p_i >0$ and hence shows the asymptotic behaviour of all $g_i.$ It remains to prove that $f$ is bounded.

As $\dot{\omega}_i$ changes its sign at most once due to remark \ref{ConvergenceOmegaI}, there exists $t_0>0$ such that $\dot{\omega}_i(t) < 0$ for $t \geq t_0.$ Hence, for all $\delta > 0$ there exists $t_0>0$ such that 
\begin{align*}
\ddot{\omega}_i \leq - ( \sqrt{-C} + \delta ) \dot{\omega}_i - \left\lbrace  \frac{\sqrt{-C}}{2} - \delta \right\rbrace \frac{\omega_i}{t}
\end{align*}
for $t \geq t_0.$

As in \cite[Lemma 6.7]{AppletonSteadyRS}, it follows that for all $\delta > 0$ there exist $c_0,$ $t_0>0$ such that $\omega_i(t) \leq c_0 t^{-1/2+\delta}$ for $t \geq t_0.$ Hence there exist constants $c_1, c_2 >0$ such that $\ddot{f} \leq - c_1 \dot{f} + \frac{c_2}{t^{2-\delta}}$ for $t \geq t_0.$ This implies that $f$ is bounded and, as $\dot{f}>0,$ thus  converges.
\end{proof}

\begin{remarkroman} The K\"ahler condition $\frac{d}{dt} g_i^2 = - q_i f$ for the metric gives rise to the preserved locus of trajectories satisfying
\begin{align*}
\frac{\dot{g}_i}{g_i} = - \frac{q_i}{2} \frac{f}{g_i^2} \ \text{ and } \  \left(- \dot{u} + \tr(L) + \frac{\dot{f}}{f} \right) \frac{\dot{g}_i}{g_i} = \frac{\varepsilon}{2} + \frac{p_i}{g_i^2}.
\end{align*}

Due to the smoothness conditions of $f$ and $g_i,$  the condition $- q_i > 0$ is necessary for any K\"ahler metric to exist and then the shape operator is positive definite. In the steady case, proposition \ref{AsymptoticsOfPotentialFunction} furthermore implies $\frac{d}{dt} g_i^2 \to \frac{2 p_i}{\sqrt{-C}}$ as $t \to \infty$ and thus
\begin{align*}
f(t) \to - \frac{2 p_i}{\sqrt{-C} q_i} \ \text{ and } \ \frac{g_i^2(t)}{t} \to \frac{2 p_i}{\sqrt{-C}}
\end{align*}
as $t \to \infty.$

In particular, $\frac{p_i}{q_i}<0$ must be independent of $i$ for any K\"ahler metric to exist.
\end{remarkroman}

A modification of the argument in \cite{DWExpandingSolitons} shows that the expanding Ricci solitons are asymptotically conical:

\begin{proposition}
The expanding Ricci soliton metrics with \eqref{APrioriBounds} are complete and satisfy
\begin{align*}
\frac{- \dot{u}(t)}{t} \to \frac{\varepsilon}{2} \ \text{ and } \ t \cdot {L_t} \to \mathbb{I}_n
\end{align*}
as $t \to \infty.$
\label{AsymptoticsExpandingRSinProductsOverFanos}
\end{proposition}
\begin{proof}
Recall that \eqref{APrioriBounds} ensures that the shape operator $L_t$ remains positive definite. Thus, by definition, the variables $\mathcal{L}, X_i, Y_i, \omega_i$ of remark \ref{CompletenessSOneBundleViaBoundedPhaseSpaceRegion} remain positive and proposition \ref{AsymptoticsOfPotentialFunction} implies furthermore that $\mathcal{L}, X_i, Y_i \to 0$ as $s \to \infty.$ 

On the other hand, notice that the limits 
\begin{align*}
y_{i, \infty} = \lim_{s \to \infty} \frac{Y_i}{\mathcal{L}} = \lim_{t \to \infty} \frac{1}{g_i} \ \text{ and } \ 
\omega_{i, \infty}  = \lim_{s \to \infty} \omega_i 
\end{align*}
exist, see also remark \ref{ConvergenceOmegaI}, and furthermore $\omega_{i, \infty}^2 \leq \frac{2 p_i}{q_i^2}.$ The differential equations
\begin{align*}
\frac{d}{ds} \frac{X_0}{\mathcal{L}^2} & =\left( -\sum_{i=0}^m d_i X_i^2 -1 \right) \frac{X_0}{\mathcal{L}^2} + \sum_{i=1}^m \frac{d_i q_i^2}{4} \omega_i^2 \frac{Y_i^2}{\mathcal{L}^2} + \frac{\varepsilon}{2} \left( X_0 + 1 \right), \\
\frac{d}{ds} \frac{X_i}{\mathcal{L}^2} & =\left( -\sum_{i=0}^m d_i X_i^2 -1 \right) \frac{X_i}{\mathcal{L}^2} 
+ \left( p_i - \frac{q_i^2}{2} \omega_i^2 \right)  \frac{Y_i^2}{\mathcal{L}^2} + \frac{\varepsilon}{2} \left( X_i + 1 \right)
\end{align*}
follow directly from remark \ref{CompletenessSOneBundleViaBoundedPhaseSpaceRegion} and imply that 
\begin{align*}
\frac{X_0}{\mathcal{L}^2} & \to \Lambda_0 = \sum_{i=1}^m \frac{d_i q_i^2}{4} \omega_{i, \infty}^2 y_{i, \infty}^2 + \frac{\varepsilon}{2} > 0 \\
\frac{X_i}{\mathcal{L}^2} & \to \Lambda_i = \left( p_i - \frac{q_i^2}{2} \omega_{i, \infty}^2 \right) y_{i, \infty}^2 + \frac{\varepsilon}{2} > 0
\end{align*}
as $s \to \infty.$ Thus 
\begin{align*}
\frac{Y_i^{'}}{\mathcal{L}^{'}} = \frac{Y_i \left( \sum_{i=0}^m d_i X_i^2 - \frac{\varepsilon}{2} \mathcal{L}^2 - X_i \right)}{\mathcal{L} \left( \sum_{i=0}^m d_i X_i^2 - \frac{\varepsilon}{2} \mathcal{L}^2 \right)} \to y_{i, \infty} \cdot \frac{\varepsilon + 2 \Lambda_i}{\varepsilon}
\end{align*}
as $s \to \infty$. If $y_{i, \infty} >0$, then L'H\^{o}pital's rule implies the contradiction $\Lambda_i = 0.$ Thus $y_{i, \infty} =0$ and $\Lambda_i = \frac{\varepsilon}{2}$ for $i = 0, \ldots, m.$ 

Having established $\frac{X_i}{\mathcal{L}^2} \to \frac{\varepsilon}{2}$ as $s \to \infty,$ integrating the ODE for $\mathcal{L}$ as in \cite[discussion after Lemma 3.15]{DWExpandingSolitons} yields $\mathcal{L}^2 \cdot s \to \frac{1}{\varepsilon}$ as $s \to \infty.$ The relation $dt = \mathcal{L} ds$ thus implies $s \sim \frac{\varepsilon}{4} t^2$ and hence $\mathcal{L} \cdot t \to \frac{2}{\varepsilon}$ as $t \to \infty.$

According to the definition of the coordinate change in remark \ref{CompletenessSOneBundleViaBoundedPhaseSpaceRegion}, the soliton potential is recovered via $-\dot{u} = \frac{1}{\mathcal{L}} \left( 1 - \sum_{i=0}^m d_i X_i \right)$ and the shape operator is determined by the quotients $\frac{X_i}{\mathcal{L}}$. The corresponding asymptotic behaviour is immediate.
\end{proof}

\subsubsection{The L\"u-Page-Pope set-up}
\label{SectionLuPagePopeSetUp}

Let $L$ be the total space of a complex line bundle over a single Fano K\"ahler Einstein manifold whose Euler class is a rational multiple of the first Chern class of the base, equipped with the same geometry as in section \ref{DancerWangSetUpSection}. L\"u-Page-Pope \cite{LuPagePopeQEinstein} have found Ricci flat metrics of warped product type on $L \times S^{m},$ for $m > 1,$ by exhibiting an explicit formula. This section shows that all of their examples also support complete steady and expanding Ricci solitons, thus proving Theorem \ref{RSInLuPagePopeSetUp}. In fact, as L\"u-Page-Pope remark, the factor $S^{m}$ can be replaced by any Einstein manifold $(N^{d_2},h_2)$ of positive Ricci curvature. Suppose that $\Ric_N = (d_2-1) h_2.$ Then, with the same notation for the metric on the principal circle bundle as in section \ref{DancerWangSetUpSection}, the metric on the principal hypersurface is given by the warped product
\begin{equation*}
g_t = f(t)^2 \theta \otimes \theta + g_1(t)^2 \pi^{*} h_1 + g_2(t)^2 h_2
\end{equation*}
and the Ricci soliton equations are given by the ODE system
\begin{align*}
\ddot{u} = & \ \frac{\ddot{f}}{f} + \sum_{i=1}^2 \frac{\ddot{g}_i}{g_i} - \frac{\varepsilon}{2}, \\
\frac{d}{dt} \frac{\dot{f}}{f} = & \ - \frac{\dot{f}}{f} (- \dot{u} + \tr(L) ) + \frac{\varepsilon}{2} + d_1 \frac{q_1^2}{4} \frac{f^2}{g_1^4}, \\
\frac{d}{dt} \frac{\dot{g}_1}{g_1} = & \ - \frac{\dot{g}_1}{g_1} (-\dot{u} + \tr(L)) + \frac{\varepsilon}{2} + \frac{p_1}{g_1^2} - \frac{q_1^2}{2} \frac{f^2}{g_1^4}, \\
\frac{d}{dt} \frac{\dot{g}_2}{g_2} = & \ - \frac{\dot{g}_2}{g_2} (-\dot{u} + \tr(L)) + \frac{\varepsilon}{2} + \frac{d_2-1}{g_2^2}.
\end{align*}
This ansatz gives rise to smooth Ricci soliton metrics provided that the initial conditions 
\begin{align}
f(0)=0, & \ \dot{f}(0)=1 \ \text{ and } \ g_i(0) >0, \ \dot{g}_{i}(0) = 0, \label{LuPagePopeInitialConditions} \\
 & \hspace{-0.6mm} u(0)=\dot{u}(0)=0, \ 2 \ddot{u}(0) = C \nonumber
\end{align}
are satisfied, where $C$ is the constant in the conservation law \eqref{GeneralConservationLaw}. 

Notice that this is a special case of the Dancer-Wang set-up in section \ref{DancerWangSetUpSection} if one chooses $p_2=d_2-1$ and $q_2=0.$ However, this allows a simplified argument, which shall briefly be sketched: 

Fix $\varepsilon \geq 0.$ As in section \ref{DancerWangSetUpSection} one has $\ddot{g}_1(0)>0$ and the assumption $d_2>1$ implies that also $2 \frac{\ddot{g}_2(0)}{g_2(0)} = \frac{\varepsilon}{2} + \frac{d_2-1}{g_2^2(0)} >0.$ In particular it follows that $\dot{g}_i(t)>0$ for $i=1,2$ and sufficiently small $t>0.$ Furthermore it is clear from the ODE system that $\dot{f}>0$ and $\dot{g_2}>0$ are preserved. Due to proposition \ref{CompletenessDancerWangSetUp}, in order to establish completeness, it suffices to show that the set 
\begin{align*}
\left\lbrace \ \dot{g}_1 > 0 \ \text{ and } \ \frac{f^2}{g_1^2} < \frac{4p_1}{(d_1+2)q_1^2} \ \right\rbrace 
\end{align*}
is preserved by the Ricci soliton ODE if $\varepsilon \geq 0.$ This is now straightforward:

Consider the quotient $\omega_1 = \frac{f}{g_1}.$ It satisfies the ODE
\begin{align*}
\ddot{\omega}_1 = \dot{\omega}_1 \cdot \frac{d}{dt} \left\lbrace  u - \ln g_1^{d_1+1} g_2^{d_2} \right\rbrace + \frac{\omega_1}{g_1^2} \left\lbrace \frac{(d_1+2)q_1^2}{4} \omega_1^2 - p_1 \right\rbrace.
\end{align*}
In particular, as long as $\omega_1^2(t) < \frac{4p_1}{(d_1+2)q_1^2},$ one has 
\begin{align*}
\dot{\omega}_1(t) \leq \frac{(g_1^{d_1}g_2^{d_2})(0)}{(g_1^{d_1+1}g_2^{d_2})(t)} \exp(u(t)) \leq \frac{1}{g_1(0)}
\end{align*}
as $u(0)=0$ and $\dot{\omega}_1(0)= \frac{1}{g_1(0)}.$ Then the same procedure as in the proof of proposition \ref{CompletenessDimOneIsOneWithPotentialFunctionTrick} yields Theorem \ref{RSInLuPagePopeSetUp}:

\begin{proposition} Let $d_2>1$ and $\varepsilon\geq 0.$ Then for any $\bar{g}>0$ there exists a constant $C_0 \leq 0$ such that the steady and expanding Ricci soliton trajectories in the L\"u-Page-Pope set-up with $C < C_0$ and $g_i(0)>\bar{g}$ for $i=1,2$ in \eqref{LuPagePopeInitialConditions} and are complete.
\label{CompleteTrajectoriesLuPagePopeSetUp}
\end{proposition}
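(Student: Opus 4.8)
The plan is to run the bootstrap argument from the proof of proposition \ref{CompletenessDimOneIsOneWithPotentialFunctionTrick}, now with the one-dimensional volume weight $f_2$ replaced by the product $g_1^{d_1} g_2^{d_2}$. By proposition \ref{CompletenessDancerWangSetUp} it suffices to show that, for suitable initial data, the set $\lbrace \dot{g}_1 > 0 \ \text{ and } \ \omega_1 < \omega_{*} \rbrace$ with $\omega_1 = f/g_1$ and $\omega_{*} = \sqrt{4 p_1 / ((d_1+2)q_1^2)}$ is preserved for all $t \geq 0$. Indeed, once $\omega_1 < \omega_{*}$ is known for all time, the inequality $\omega_1^2 < 4 p_1/((d_1+2)q_1^2) \leq 2 p_1 / q_1^2$ verifies the hypothesis $f^2/g_1^2 < 2p_1/q_1^2$ of that proposition, while the constraint attached to $g_2$ is vacuous since $q_2 = 0$. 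First I would record, as was already observed before the statement, that $\omega_1 < \omega_{*}$ forces $\dot{g}_1 > 0$ and that $\dot{f}, \dot{g}_2 > 0$ are preserved, so that the shape operator $L$ is positive definite as long as $\omega_1 < \omega_{*}$; and that, because $\omega_1(0)=0$ and $\dot{\omega}_1(t) \leq g_1(0)^{-1}\exp(u(t)) \leq g_1(0)^{-1} < \bar{g}^{-1}$ while $\omega_1 < \omega_{*}$, the condition $\omega_1 < \omega_{*}$ holds \emph{unconditionally} on $[0,T)$ with $T = \omega_{*}\bar{g}$.

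Next I would check the remaining hypothesis of theorem \ref{MainGrowthEstimateTheorem}. Since $g_1, g_2$ are increasing whenever $\omega_1 < \omega_{*}$, the relevant trace stays bounded:
\begin{equation*}
-\tr_{g_t} B = \frac{d_1 p_1}{g_1^2(t)} + \frac{d_2(d_2-1)}{g_2^2(t)} \leq \frac{d_1 p_1 + d_2(d_2-1)}{\bar{g}^2}
\end{equation*}
for all such $t$, and this bound is already finite at $t=0$ because the collapsing circle direction $f$ does not contribute. Hence I may fix $c_{*}$ to be any number exceeding the right-hand side, a choice depending only on $\bar{g}$ and on the fixed geometric and topological data. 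Theorem \ref{MainGrowthEstimateTheorem} then applies with $t_{*}=0$: for each $c>0$ and each $\tau \in (0,T)$ there is a constant $C_0 \leq 0$, depending only on $c_{*}$, $c$ and $\tau$, such that $C < C_0$ forces $-\dot{u}(t) \geq c$ for all $t>\tau$ and for as long as $\omega_1 < \omega_{*}$, since the hypotheses persist exactly on that range.

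Finally I would close the loop. Integrating the resulting linear decay $u(t) \leq u(\tau) - c(t-\tau)$ against the a priori bound $\dot{\omega}_1(t) \leq g_1(0)^{-1}\exp(u(t))$ gives, while $\omega_1 < \omega_{*}$,
\begin{equation*}
\omega_1(t) \leq \omega_1(\tau) + \frac{1}{g_1(0)}\int_{\tau}^{t} \exp(u(s))\, ds \leq \frac{1}{\bar{g}}\left( \tau + \frac{1}{c} \right)
\end{equation*}
for all $t \geq \tau$. Choosing $c$ large and $\tau$ small so that $\bar{g}^{-1}(\tau + c^{-1}) < \omega_{*}$ fixes $C_0$ through the theorem, and a first-exit argument finishes: were there a first time $t_1 \in (0,T)$ with $\omega_1(t_1) = \omega_{*}$, the above estimate would be valid on $[0,t_1)$ and, together with $\omega_1 \leq \tau/\bar{g} < \omega_{*}$ on $[0,\tau]$, would force $\omega_1(t_1) \leq \bar{g}^{-1}(\tau + c^{-1}) < \omega_{*}$ by continuity, a contradiction. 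Thus $\omega_1 < \omega_{*}$ holds for all $t \geq 0$, and completeness follows from proposition \ref{CompletenessDancerWangSetUp}.

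The step I expect to demand the most care is the \emph{uniformity} of $C_0$ across the family of trajectories. One must arrange that the threshold choices of $c$ and $\tau$, and hence $C_0$, depend only on $\bar{g}$ (through $c_{*}$ and $\omega_{*}$) and not on the individual values $g_i(0) > \bar{g}$ nor on the a priori unknown exit time. This is precisely guaranteed by the final clause of theorem \ref{MainGrowthEstimateTheorem}, which asserts that $C_0$ depends only on $c_{*}$, $c$ and the difference $\tau - t_{*}$; with $t_{*}=0$ fixed here, the construction is uniform and the argument goes through.
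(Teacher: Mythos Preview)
Your argument is correct and follows precisely the route the paper sketches, which merely records the bound $\dot{\omega}_1(t) \leq g_1(0)^{-1}\exp(u(t))$ and then says ``the same procedure as in the proof of proposition \ref{CompletenessDimOneIsOneWithPotentialFunctionTrick}'' applies. One small slip: in your first-exit paragraph the hypothetical time $t_1$ should lie in the maximal interval of existence, not in $(0,T)$, since you already showed $\omega_1 < \omega_{*}$ holds unconditionally on $[0,T)$; with that correction the contradiction argument goes through verbatim.
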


\begin{remarkroman}
It is easy to check that the proof of proposition \ref{CompleteTrajectoriesLuPagePopeSetUp} also works if $\varepsilon>0$ and $d_2=1.$ Moreover, in this case, it is sufficient to assume $g_2(0)>0.$
\end{remarkroman}


\end{document}